\documentclass{amsart}
\usepackage{amsmath,amssymb,amsthm,amsfonts}
\usepackage{mathrsfs,eufrak,euscript}
\usepackage{enumerate}
\theoremstyle{plain}
\newtheorem{thm}[subsection]{Theorem}
\newtheorem{prop}[subsection]{Proposition}
\newtheorem{lemma}[subsection]{Lemma}
\newtheorem{eg}[subsection]{Example}

\newtheorem{defn}[subsection]{Definition}
\newtheorem{rmk}[subsection]{Remark}

\newtheorem{note}[subsection]{Note}
\newtheorem{cor}[subsection]{Corollary}
\title[uniqueness of polar decomposition]{On the polar decomposition of right Linear operators in Quaternionic Hilbert spaces}
\author{G. Ramesh}
\address{G. Ramesh, Department of Mathematics, IIT Hyderabad, Kandi, Sangareddy(M), Medak (Dist), Telangana, India 502285.}
\email{rameshg@iith.ac.in}
\author{P. Santhosh Kumar}
\address{ P. Santhosh Kumar, Department of Mathematics, IIT Hyderabad, Kandi, Sangareddy(M), Medak (Dist), Telangana, India 502285.}
\email{ma12p1004@iith.ac.in}
\subjclass{47S10, 35P05.}
\keywords{right linear operator, closed operator, polar decomposition, slice Hilbert space, bounded transform}
\date{\today}

\begin{document}
\maketitle
\begin{abstract}
In this article we prove the existence of the polar decomposition for densely defined closed right linear operators in  quaternionic Hilbert spaces:
If $T$ is a densely defined closed right linear operator in a quaternionic Hilbert space $H$, then there exists a partial isometry $U_{0}$ such that $T = U_{0}|T|$.
In fact $U_{0}$ is unique if $N(U_{0}) = N(T)$. In particular, if $H$ is separable and $U$ is a partial isometry with $T = U|T|$, then we prove that $U = U_{0}$ if and only if either $N(T) = \{0\}$ or $R(T)^{\bot} = \{0\}$.
\end{abstract}

\section{Introduction}

The polar decomposition of an operator is a generalization of the polar decomposition of a non zero complex number.
If $T$ is a bounded linear operator between complex Hilbert spaces $H$ and $K$, then

\begin{equation}\label{polarform}
T=U_0|T|
\end{equation}
for some partial isometry $U_0$ with the  initial space $N(T)^{\bot}$, the orthogonal complement of the null space of $T$, a subspace of $H$ and   the final space $\overline{R(T)}$, the closure of the range of $T$, which is a subspace of $K$. Here $|T|$ is the square root of $(T^{*}T)$. Further, the partial isometry is unique if the null spaces of $T$ and $U_0$ are same. This factorization with the unique partial isometry $U_0$ is called the \text{polar decomposition} of $T$. This decomposition enable us to know the properties of $T$ by knowing the properties of the positive operator  $|T|$. It is known that the polar decomposition of a bounded operator defined between two complex Hilbert spaces always exist \cite[page 262]{Halmos}. The case of unbounded densely defined closed operator can be found in \cite[ page 218]{Pedersen}.

 In a recent article W. Ichinose and K. Iwashita gave a necessary and sufficient condition for any decomposition  as in Equation (\ref{polarform})  of bounded linear operator between complex Hilbert spaces to be the polar decomposition \cite{ichinose}.

 The polar decomposition of a bounded linear operator on a quaternionic Hilbert space is discussed in \cite[Theorem 2.8]{Ghiloni}. This result for the unbounded case is not found in the literature. In this article, we use quaternionic version of the bounded transform (see \cite{Alpay} for details ) to establish the existence of the polar decomposition of densely defined closed right linear operators in quaternionic Hilbert spaces.

  We also prove the result similar to that of Ichinose \cite[Theorem 1.2]{ichinose} for the case of densely defined closed right linear   operators defined in a separable quaternionic Hilbert space.  First we prove this for bounded operators by associating a $2\times 2$ matrix of complex operators for a given quaternionic operator. This technique for the case of quaternion matrices can be found in Brenner and Lee \cite{Brenner} and for bounded operators on $\ell^{2}$ in \cite{Feng}. Though every separable Hilbert space is isometrically isomorphic with $\ell^{2}$, the decomposition of the quaternion $\ell^{2}$ space into complex $\ell^{2}$ space is easy compared to the general space. To deal this general case we need to use the decomposition of the quaternionic Hilbert space into slice complex Hilbert spaces. Once this setting is done, all the results can be proved easily. In doing so, we give proofs of some results of \cite{Feng} for the separable Hilbert space.

Later, we prove the result for densely defined closed right linear operators defined in a separable quaternionic Hilbert space.

 We organize this article in five sections. In the second section we fix some of the notations, recall some basic properties of the ring of quaternions, definitions and some results in quaternionic Hilbert spaces.

 In the third section we prove the existence of square root of unbounded positive right linear operators in quaternionic Hilbert spaces.

 In the fourth section we prove the existence of the polar decomposition of densely defined closed right linear operators in quaternionic Hilbert spaces.

 In the fifth section we give necessary and sufficient condition for a decomposition as in Equation (\ref{polarform}), to be the polar decomposition. In this case, we assume the operator is defined in a separable quaternionic Hilbert space. 
\section{Notations and Prelimanaries}

Let $\mathbb{H}$ denotes the ring of all real quaternions. If $q \in \mathbb{H}$, then $q = q_{0}+q_{1}i+q_{2}j+q_{3}k$, where $q_{\ell} \in \mathbb{R}$, for each $\ell = 0, 1, 2 , 3$. Here $i,j,k$ satisfy
\begin{equation*}
i^{2}= j^{2}=k^{2} =  i\cdot j \cdot k = -1.
\end{equation*}
For $q\in \mathbb{H}$, the conjugate of $q$ is $\overline{q}= q_{0}-q_{1}i-q_{2}j-q_{3}k$. For $p,q \in \mathbb{H}$, we have $\overline{p\cdot q} = \overline{q}\cdot \overline{p}$ and $|q|: = \sqrt{q_{0}^{2}+q_{1}^{2}+q_{2}^{2}+q_{3}^{2}}$. The real part of $\mathbb{H}$ is denoted by Re($\mathbb{H}) = \left\{ q \in \mathbb{H} : q = \overline{q}\right\}$ and the imaginary part of $\mathbb{H}$ is denoted by Im($\mathbb{H}) = \left\{ q \in \mathbb{H} : q = -\overline{q}\right\}.$ The set $ \mathbb{S}:= \left\{ q \in \text{Im}(\mathbb{H}): |q| = 1 \right\}$ is called the imaginary unit sphere. For $m\in \mathbb{S},\; \mathbb{C}_{m}:= \{\alpha + m \beta : \alpha, \beta \in \mathbb{R}\}$ is called a slice of $\mathbb{H}$. Here $\mathbb{C}_{m}$ is  isomorphic to  the complex field $\mathbb{C}$ by the mapping $\alpha + m \beta \to \alpha + i \beta$.

\begin{defn} \cite[Definition 2.3]{Ghiloni}
	Let $ H $ be a right $ \mathbb{H}- $ module. A map
	\begin{equation*}
	\left\langle \cdot | \cdot \right\rangle \colon H \times H \longrightarrow \mathbb{H}
	\end{equation*}
	satisfying:
	\begin{enumerate}
		\item If $u \in H$, then $\left\langle u | u \right\rangle = 0 \Leftrightarrow u = 0$
		\item $\left\langle u | v +w \cdot q\right\rangle = \left\langle u | v\right\rangle  + \left\langle u | w \right\rangle \cdot q $, for all $u,v \in H$ and $q \in \mathbb{H}$
		\item $\left\langle u | v \right\rangle = \overline{\left\langle v | u \right\rangle }$, for all $u,v \in H$,
	\end{enumerate}
	is called an inner product on $H$. If we define  $\|u\|^2= \left\langle u | u\right\rangle$, for all $u \in H$, then $\|\cdot \|$ is a norm on $H$ and it is called the norm induced by $\left\langle \cdot | \cdot \right\rangle$. If $(H, \|\cdot \|)$ is complete space then it is called a right quaternionic Hilbert space.
\end{defn}
\begin{defn}
	Let $H$ be a right quaternionic Hilbert space. If $H$ has a countable dense subset then $H$ is called separable.
\end{defn}

\begin{defn}\cite{Alpay}
	A map $T \colon \mathcal{D}(T) \to H $ with the domain $\mathcal{D}(T) \subseteq H $ is said to be a right $\mathbb{H}- $ linear operator if
	\begin{equation*}
	T(x\cdot q + y) = T(x) \cdot q + T(y),\; \text{for every}\;  x,y \in \mathcal{D}(T)\; \text{and} \; q \in \mathbb{H}.
	\end{equation*}
	The null space of $T$ is defined by
 \begin{equation*}
 N(T) = \{x \in \mathcal{D}(T) \colon T(x) = 0\},
  \end{equation*}
 and the range space of $T$ is defined by
  \begin{equation*}
  R(T) = \{Tx \colon x \in \mathcal{D}(T)\}.
  \end{equation*}
  \end{defn}
  \begin{defn}\cite[Definition 2.1]{Alpay}
  Let $T \colon \mathcal{D}(T) \to H$ be right $\mathbb{H}$- linear operator. The graph of $T$ is denoted by $G(T)$ and it is defined by
  \begin{equation*}
  G(T) = \{(x,Tx) \colon x \in \mathcal{D}(T)\}.
  \end{equation*}
  The operator  $T$ is called
  \begin{enumerate}
  	\item closed,  if $G(T)$ is closed in $H\times H$. Equivalently, if $(x_{n}) \subset \mathcal{D}(T)$ with $(x_{n})$ converges to $x \in H$ such that  $Tx_{n}$ converges to $y \in H$ then $x\in \mathcal{D}(T)$ and $Tx=y$.
  	\item densely defined, if $\overline{\mathcal{D}(T)} = H$. (Here $\overline{\mathcal{D}(T)}$ denotes the closure $\mathcal{D}(T)$).
  \end{enumerate}
   \end{defn}
\begin{defn}\cite[Definition 2.2]{Alpay}
	Let $ T \colon \mathcal{D}(T) \to H $ be densely defined right $\mathbb{H}$- linear operator. Then there exists a unique right $\mathbb{H}$- linear opearator denoted by $T^{*}$  with the domain
	\begin{equation*}
	\mathcal{D}(T^{*}) = \{x \in H \colon \; \text{there exists unique} \; z \in H \; \text{ with} \; \left\langle x | Ty \right\rangle = \left\langle z| y\right\rangle\}
	\end{equation*}
	such that   $ \left\langle x | Ty\right\rangle = \left\langle T^{*}x | y\right\rangle $, for every  $ y \in \mathcal{D}(T), x \in \mathcal{D}(T^{*})$. This operator $ T^{*} $ is called the adjoint of $T.$
\end{defn}
\begin{thm}\cite[Theorem 2.3]{Alpay}
Let $T \colon \mathcal{D}(T) \to H$ be densely defined closed operator. Then
\begin{enumerate}
	\item $T^{*}$ is closed
	\item $R(T)^{\bot} = N(T^{*})$ and $N(T)^{\bot} = \overline{R(T^{*})}$.
\end{enumerate}
\end{thm}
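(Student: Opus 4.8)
The proof I have in mind is the classical graph-theoretic argument, transplanted to the right $\mathbb{H}$-module setting; essentially all one must verify is that the standard Hilbert-space toolkit — orthogonal decomposition, the product space $H\times H$, and a ``rotation'' unitary on it — remains available over $\mathbb{H}$. To set up: the product $H\times H$, with $\langle(x_1,x_2)|(y_1,y_2)\rangle:=\langle x_1|y_1\rangle+\langle x_2|y_2\rangle$, is again a right quaternionic Hilbert space, and $V\colon H\times H\to H\times H$, $V(u,v):=(-v,u)$, is right $\mathbb{H}$-linear, bijective and isometric, hence unitary, with $V^2=-I$. Two consequences will be used repeatedly: $(VS)^{\bot}=V(S^{\bot})$ for every subset $S$, and (the quaternionic projection theorem) $M^{\bot\bot}=M$ for every closed subspace $M$ of $H$ or of $H\times H$. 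Now, using property (3) of the inner product, the defining relation $\langle x|Ty\rangle=\langle T^{*}x|y\rangle$ says exactly that $(x,z)\in G(T^{*})$ if and only if $\langle(x,z)|V(y,Ty)\rangle=\langle z|y\rangle-\langle x|Ty\rangle=0$ for all $y\in\mathcal{D}(T)$ — density of $\mathcal{D}(T)$ being precisely what forces the vector $z$ in the definition of $T^{*}$ to be unique. Hence $G(T^{*})=(V\,G(T))^{\bot}=V(G(T)^{\bot})$, which, being an orthogonal complement, is closed; this is assertion (1).

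For the rest I would first note that $T^{*}$ is densely defined: if $0\neq w\perp\mathcal{D}(T^{*})$ then $(0,w)\perp V\,G(T^{*})$, and since $V\,G(T^{*})=V^{2}(G(T)^{\bot})=G(T)^{\bot}$ we get $(0,w)\in(G(T)^{\bot})^{\bot}=G(T)$, forcing $w=T0=0$, a contradiction. Repeating the graph computation with $T^{*}$ in place of $T$ then yields $G(T^{**})=V(G(T^{*})^{\bot})=V^{2}(G(T)^{\bot\bot})=-G(T)=G(T)$, that is $T^{**}=T$ — this is the step where closedness of $T$ enters, via $G(T)^{\bot\bot}=G(T)$, together with the fact that $G(T)$ is a submodule so that $-G(T)=G(T)$. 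Now (2) is bookkeeping: $x\in R(T)^{\bot}$ means $\langle x|Ty\rangle=0=\langle 0|y\rangle$ for all $y\in\mathcal{D}(T)$, which by the definition of the adjoint is exactly $x\in\mathcal{D}(T^{*})$ with $T^{*}x=0$, so $R(T)^{\bot}=N(T^{*})$; applying this identity to $T^{*}$ (licit, as $T^{*}$ is densely defined and closed) gives $R(T^{*})^{\bot}=N(T^{**})=N(T)$, and taking orthogonal complements — $N(T)$ being closed because $T$ is closed — gives $\overline{R(T^{*})}=R(T^{*})^{\bot\bot}=N(T)^{\bot}$.

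The main obstacle is not the algebra above but securing its foundations over $\mathbb{H}$: that $H\times H$ is genuinely a quaternionic Hilbert space, that the projection theorem (hence $M^{\bot\bot}=M$ for closed $M$) is valid for right $\mathbb{H}$-modules, and that $V$ counts as unitary in the right-linear sense so that $(VS)^{\bot}=V(S^{\bot})$. These facts are standard in the quaternionic functional-analysis literature; once they are in hand, the argument is purely formal.
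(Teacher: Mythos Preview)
Your argument is correct and is precisely the standard graph-rotation proof carried over to the right $\mathbb{H}$-linear setting; all the ingredients you flag (the product Hilbert space $H\times H$, the projection theorem and $M^{\bot\bot}=M$ for closed submodules, and the unitarity of $V$) are indeed available in the quaternionic literature. Note, however, that the paper does not supply its own proof of this statement: it is quoted verbatim as \cite[Theorem 2.3]{Alpay} and used as background, so there is no in-paper argument to compare against --- your write-up would in fact serve as a self-contained replacement for that citation.
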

By the closed graph theorem, if $T \colon \mathcal{D}(T) \to H$ is closed with $\mathcal{D}(T) = H$, then $T$  must be bounded.

\begin{defn}
	Let $T \colon \mathcal{D}(T) \to H$ and $S \colon \mathcal{D}(S) \to H$ be right $\mathbb{H}$- linear operators. Then we say that $ S \subseteq T$ if $\mathcal{D}(S) \subseteq \mathcal{D}(T)$ and $Sx = Tx$, for all $x \in \mathcal{D}(S)$.
	
	Furthermore
\begin{enumerate}
		\item $ST$ is defined with the domain $ \mathcal{D}(ST) = \{x \in \mathcal{D}(T) : Tx \in \mathcal{D}(S)\} $
		such that $ST(x) = S(Tx)$, for all $x \in \mathcal{D}(ST)$.
		\item $S+T$ is defined with the domain $\mathcal{D}(S+T)= \mathcal{D}(S) \cap \mathcal{D}(T)$ such that $S+T(x) = Sx + Tx$, for all $x \in \mathcal{D}(S+T)$.
\end{enumerate}
\end{defn}
\begin{defn}\cite[Definition 2.12]{Ghiloni}
	Let $T \colon \mathcal{D}(T) \to H$ be densely defined right $\mathbb{H}$- linear. Then $T$ is said to be
	\begin{enumerate}
		\item self-adjoint if $ T = T^{*}$
		\item positive if $T=T^{*}$ and $\left\langle x | Tx\right\rangle \geq 0 $, for all $x \in \mathcal{D}(T)$
		\item anti self-adjiont if $T^{*}= -T$
		\item normal if $T$ closed and $ TT^{*} = T^{*}T $
		\item unitary if $\mathcal{D}(T) = H$ and $T^{*}T = TT^{*}= I$.
	\end{enumerate}
\end{defn}
\begin{rmk}
Let $T \colon \mathcal{D}(T) \to H$ be right $\mathbb{H}$- linear. Then $T$ is said to be strictly positive if  $\left\langle x | Tx\right\rangle \geq \lambda \left\langle x | x\right\rangle$, for some $\lambda > 0 $ and for all $x \in \mathcal{D}(T)$.
\end{rmk}

\begin{defn}
	Let $T \colon \mathcal{D}(T) \to H$ be right $\mathbb{H}$- linear. Then $T$ commutes with bounded right linear operator $B$ on $H$ if and only if $TB \subseteq BT$. That is $TBx = BTx$, for all $x \in \mathcal{D}(T)$.
\end{defn}
\begin{thm} \cite[Theorem 2.10]{Ghiloni}
	Let $T \colon H \to H$ be right $\mathbb{H}$- linear. Then $T$ is bounded  or continuous, if there exists $K>0$ such that
	\begin{equation*}
	\|Tx\| \leq K \|x\|, \; \text{for all} \; x \in H.
	\end{equation*}
	We denote the set of all bounded right $\mathbb{H}$- linear operators on $H$ by $\mathcal{B}(H)$. If $T\in \mathcal{B}(H)$, then
	\begin{equation*}
	\|T\| = \sup \left\{\|Tx\|: x\in H, \|x\|=1\right\}
	\end{equation*}
	is finite and it is called the norm of $ T$.
\end{thm}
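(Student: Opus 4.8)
The plan is to reduce everything to the scaling behaviour of the induced norm and then run the standard boundedness--continuity argument. The one genuinely quaternionic ingredient I would isolate first is the homogeneity identity: for $u\in H$ and $q\in\mathbb{H}$,
\[
\|u\cdot q\|^{2}=\langle u\cdot q\,|\,u\cdot q\rangle=\overline{q}\,\langle u\,|\,u\rangle\,q=|q|^{2}\,\|u\|^{2},
\]
which follows from inner-product axioms $(2)$ and $(3)$ together with the facts that $\langle u\,|\,u\rangle$ is a nonnegative real number and that real scalars are central in $\mathbb{H}$; hence $\|u\cdot q\|=|q|\,\|u\|$. Keeping $q$ on the correct side and using $\langle u\,|\,u\rangle\in\mathbb{R}$ is the only subtlety relative to the real or complex case, and it is exactly what lets the classical arguments go through verbatim.

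Given this, I would prove the equivalence of the two formulations of ``bounded'' in the usual way. For bounded $\Rightarrow$ continuous: from $\|Tx\|\le K\|x\|$ and right $\mathbb{H}$-linearity, $\|Tx-Ty\|=\|T(x-y)\|\le K\|x-y\|$, so $T$ is Lipschitz, hence continuous. For continuous $\Rightarrow$ bounded: using continuity at $0$, choose $\delta>0$ with $\|Tz\|\le 1$ whenever $\|z\|\le\delta$; for $x\ne 0$ put $z=x\cdot\bigl(\delta\|x\|^{-1}\bigr)$, so $\|z\|=\delta$ by the identity above, and then $Tx=T\bigl(z\cdot(\delta^{-1}\|x\|)\bigr)=Tz\cdot(\delta^{-1}\|x\|)$ yields $\|Tx\|\le\delta^{-1}\|x\|$; the case $x=0$ is trivial, so $K=\delta^{-1}$ works.

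Finally, for the norm assertion: if $T\in\mathcal{B}(H)$ with $\|Tx\|\le K\|x\|$ for all $x$, then in particular $\|Tx\|\le K$ for every unit vector, so $\|T\|:=\sup\{\|Tx\|:x\in H,\ \|x\|=1\}\le K<\infty$; conversely, for $x\ne 0$ one has $\|Tx\|=\|x\|\,\|T(x\cdot\|x\|^{-1})\|\le\|T\|\,\|x\|$, so this supremum is itself an admissible bound $K$, and the two notions of boundedness coincide. That $\|\cdot\|$ then defines a genuine norm on $\mathcal{B}(H)$ (positivity, homogeneity over $\mathbb{R}$, triangle inequality) is routine. I do not anticipate any real obstacle here; the only step requiring care is the homogeneity identity, precisely because in $\mathbb{H}$ one must respect the noncommutativity and exploit that $\langle u\,|\,u\rangle$ is real in order to extract $|q|$.
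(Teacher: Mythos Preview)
Your argument is correct and carefully handles the one genuinely quaternionic point, namely the homogeneity $\|u\cdot q\|=|q|\,\|u\|$, which is exactly what lets the classical Banach-space proof carry over. There is nothing to compare against, however: the paper does not prove this statement at all but simply quotes it from \cite[Theorem~2.10]{Ghiloni} as background in the preliminaries section, so your write-up supplies a proof where the paper offers only a citation.
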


\begin{defn}
	Let $T \in \mathcal{B}(H)$. Then $T$ is said to be
	\begin{enumerate}
		\item isometry if $\|Tx\| = \|x\|, \; \text{for all} \; x \in H$
		\item partial isometry if $\|Tx\| = \|x\|, \;\text{for all}\; x \in N(T)^{\bot}$. The space $N(T)^{\bot}$ called the initial space and the space $R(T)$ called the final space of $T$.
		\item orthogonal projection onto closed subspace $W$ if $T=T^{*} , \; T^{2}=T$ and $R(T) =W$. We denote this by $T = P_{W}$.
	\end{enumerate}
	
\end{defn}
We recall existence of square root of bounded positive operator on quaternionic Hilbert space, which is given in \cite[Theorem 2.18]{Ghiloni}.

\begin{thm}
	If $T \in \mathcal{B}(H)$ and $T$ is positive. Then there exists a unique operator in $\mathcal{B}(H)$, indicated by $T^{\frac{1}{2}}$, such that $T^{\frac{1}{2}}$ is positive and $T^{\frac{1}{2}} T^{\frac{1}{2}} = T$. Furthermore, it turns out that $T^{\frac{1}{2}}$ commutes with every bounded operator which commutes with $T$.
\end{thm}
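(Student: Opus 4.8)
The statement is the quaternionic analogue of the classical fact that a bounded positive operator has a unique bounded positive square root, and the plan is to obtain it by the elementary power-series construction, noting that the non-commutativity of $\mathbb{H}$ never interferes because every operator built in the argument is a \emph{real} power series in $T$ and the reals are central in $\mathbb{H}$.

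First I would normalize: the case $T = 0$ is trivial, and otherwise $T/\|T\|$ is positive of norm $1$, so since a positive square root of $T/\|T\|$ scales by the positive real $\sqrt{\|T\|}$ to a positive square root of $T$, I may assume $0 \le T \le I$. Put $X = I - T$; then $X = X^{*}$, $0 \le X \le I$, and in particular $\|X\| \le 1$. Recall the scalar expansion $\sqrt{1 - t} = 1 - \sum_{n \ge 1} c_{n} t^{n}$, with $c_{n} > 0$ and $\sum_{n \ge 1} c_{n} = 1$, converging absolutely for $|t| \le 1$, and define
\begin{equation*}
S = I - \sum_{n \ge 1} c_{n} X^{n},
\end{equation*}
the series converging in operator norm since $\sum_{n \ge 1} c_{n}\|X\|^{n} \le \sum_{n \ge 1} c_{n} < \infty$. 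Then $S = S^{*}$ because the $c_{n}$ are real and $X$ is self-adjoint; forming the Cauchy product of the series with itself (legitimate by the same absolute convergence) and invoking the scalar identity $\bigl(1 - \sum_{n \ge 1} c_{n} t^{n}\bigr)^{2} = 1 - t$ term by term gives $S^{2} = I - X = T$. For positivity, one checks $0 \le \langle x \mid X^{n} x \rangle \le \|x\|^{2}$ for every $n \ge 1$ (treat even and odd $n$ separately, using $X \ge 0$ and $\|X\| \le 1$), so that $\langle x \mid Sx \rangle \ge \|x\|^{2}\bigl(1 - \sum_{n \ge 1} c_{n}\bigr) = 0$. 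Finally $S$ is a norm-limit of real polynomials in $X = I - T$, hence of real polynomials in $T$, so $S$ commutes with every bounded operator commuting with $T$. This gives existence and the commutation property; I would denote this operator by $T^{1/2}$.

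For uniqueness, let $R \in \mathcal{B}(H)$ be positive with $R^{2} = T$. Then $RT = R^{3} = TR$, so $R$ commutes with $T$ and hence, by the construction above, with $S$; consequently $(S - R)(S + R) = S^{2} - R^{2} = 0$. Given $x \in H$, set $y = (S - R)x$, so $(S + R)y = 0$ and $\langle y \mid Sy \rangle = -\langle y \mid Ry \rangle$, where both sides are nonnegative and therefore zero. The Cauchy--Schwarz inequality for the positive semidefinite forms $(u,v) \mapsto \langle u \mid Sv \rangle$ and $(u,v) \mapsto \langle u \mid Rv \rangle$ then yields $Sy = Ry = 0$, so $(S - R)y = 0$; thus $(S - R)^{2}x = 0$ for every $x$, and since $S - R$ is self-adjoint, $\|(S - R)x\|^{2} = \langle x \mid (S - R)^{2} x \rangle = 0$ for every $x$, whence $R = S$.

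The argument is entirely routine once the bookkeeping is set up, and the only point needing (mild) care is to confirm that norm-convergence of operator series, the Cauchy product identity, and Cauchy--Schwarz for positive quaternionic sesquilinear forms all remain valid in the right $\mathbb{H}$-module setting; I expect this verification, rather than any new idea, to be the main obstacle. As an alternative one could invoke the quaternionic spectral theorem for bounded self-adjoint operators and set $T^{1/2} = \int \sqrt{\lambda}\, dE(\lambda)$, but the power-series construction above is more elementary and self-contained.
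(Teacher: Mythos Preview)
Your argument is correct and self-contained; the power-series construction, the positivity check, and the uniqueness argument via Cauchy--Schwarz for the positive form $(u,v)\mapsto\langle u\mid Sv\rangle$ all carry over to the right-quaternionic setting for exactly the reasons you indicate (every operator in sight is a norm-limit of \emph{real} polynomials in $T$, and the quaternionic Cauchy--Schwarz inequality is available---it is in fact cited in the paper as \cite[Proposition 2.2]{Ghiloni}).

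There is, however, nothing in the paper to compare your proof against: this theorem is not proved here but merely \emph{recalled} from \cite[Theorem 2.18]{Ghiloni} as background in Section~2. The paper's own contribution on square roots is Theorem~\ref{strictlysqareroot}, which treats the \emph{unbounded} positive case and uses the bounded result you have just proved as a black box (applying it to $T^{-1}$ in the strictly positive case and to $T(I+T)^{-1}$ in the general case). So your proposal supplies a proof the paper deliberately omits by citation; it is the standard elementary route and would be entirely appropriate here.
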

Where as, we prove the existence of square roof of unbounded positive opeartor in quaternionic Hilbert spaces (for details, see Theorem \ref{strictlysqareroot} of section 3).
\begin{rmk}\cite[Remark 2.13(1)]{Ghiloni}
If $J \colon H \to H$ be anti self-adjoint and unitary then $J \in \mathcal{B}(H)$ and $\|J\| = 1$.
\end{rmk}
\subsection{Slice Hilbert space:}

Given any quaternionic Hilbert space $H$ and an anti self-adjoint unitary operator $J$ on $H$  there exists a complex Hilbert space called the slice Hilbert space, defined as follows:
\begin{defn}\cite[Definition 3.6]{Ghiloni}\label{slice}
	Let $m \in \mathbb{S}$ and $J \in \mathcal{B}(H)$ be an anti self-adjoint and unitary operator. Then
	\begin{equation*}
	H^{Jm}_{\pm}:= \{x\in H \colon Jx = \pm \ x \cdot m\}
	\end{equation*}
	is called a slice Hilbert space.
\end{defn}
\begin{lemma}\cite[Lemma 3.10]{Ghiloni}\label{directsum}
	Let $H$ and $J$ be as in Definition \ref{slice}. Then
	\begin{enumerate}
	\item $H^{Jm}_{\pm} \neq \{0\}$ and the restriction of the inner product $\left\langle \cdot | \cdot \right\rangle_{\mathbb{H}}$ to $H^{Jm}_{\pm}$ is complex valued. Therefore $H^{Jm}_{\pm}$ is $\mathbb{C}_{m}$ - Hilbert space.
	\item $H = H^{Jm}_{+} \oplus H^{Jm}_{-}$.
	\end{enumerate}
\end{lemma}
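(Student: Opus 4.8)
The plan is to exhibit the decomposition $H=H^{Jm}_{+}\oplus H^{Jm}_{-}$ concretely, by writing down for each $x\in H$ its two components through an explicit formula built from $J$ and right multiplication by $m$, and then reading off the remaining assertions. First I would record the elementary facts to be used repeatedly: since $J$ is anti self-adjoint and unitary, $J^{*}=-J$ and $J^{*}J=I$, hence $J^{2}=-I$; for $\lambda\in\mathbb{C}_{m}$ one has $m\lambda=\lambda m$; and $\overline{m}=-m$ for $m\in\mathbb{S}$. Next, $x\mapsto x\cdot m$ is a real-linear isometry of $H$, so $H^{Jm}_{\pm}$, being the kernel of the continuous map $x\mapsto Jx\mp x\cdot m$, is closed in $H$; and for $x\in H^{Jm}_{\pm}$ and $\lambda\in\mathbb{C}_{m}$, $J(x\cdot\lambda)=(Jx)\cdot\lambda=\pm(x\cdot m)\cdot\lambda=\pm x\cdot(m\lambda)=\pm x\cdot(\lambda m)=\pm(x\cdot\lambda)\cdot m$, so $H^{Jm}_{\pm}$ is a right $\mathbb{C}_{m}$-subspace of $H$, closed and therefore complete in the norm inherited from $H$.

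For part (2), for $x\in H$ I would set
\begin{equation*}
x_{\pm}:=\tfrac{1}{2}\bigl(x\mp(Jx)\cdot m\bigr).
\end{equation*}
Using $J^{2}=-I$ and $m^{2}=-1$, a one-line computation gives $Jx_{\pm}=\tfrac{1}{2}\bigl(Jx\pm x\cdot m\bigr)=\pm\,x_{\pm}\cdot m$, so $x_{\pm}\in H^{Jm}_{\pm}$, while $x_{+}+x_{-}=x$; hence $H=H^{Jm}_{+}+H^{Jm}_{-}$. If $x\in H^{Jm}_{+}\cap H^{Jm}_{-}$, then $x\cdot m=Jx=-x\cdot m$, so $x\cdot m=0$ and thus $x=(x\cdot m)\cdot(-m)=0$; the sum is therefore direct, which is exactly (2).

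For part (1), I would first verify that $\langle\cdot\mid\cdot\rangle$ takes values in $\mathbb{C}_{m}$ on $H^{Jm}_{\pm}$. For $x,y\in H^{Jm}_{\pm}$, unitarity of $J$ gives $\langle x\mid y\rangle=\langle Jx\mid Jy\rangle=\langle(\pm x\cdot m)\mid(\pm y\cdot m)\rangle$; expanding with $\langle u\cdot q\mid v\rangle=\overline{q}\,\langle u\mid v\rangle$, $\langle u\mid v\cdot q\rangle=\langle u\mid v\rangle\,q$ and $\overline{m}=-m$ yields $\langle x\mid y\rangle=-m\,\langle x\mid y\rangle\,m$; hence $q:=\langle x\mid y\rangle$ satisfies $q=-mqm$, and multiplying on the left by $m$ (and using $m^{2}=-1$) gives $mq=qm$. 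Since the centralizer of $m\in\mathbb{S}$ in $\mathbb{H}$ is $\mathbb{C}_{m}$, this forces $\langle x\mid y\rangle\in\mathbb{C}_{m}$, and together with the first paragraph it shows $\bigl(H^{Jm}_{\pm},\langle\cdot\mid\cdot\rangle\bigr)$ is a complex Hilbert space once $\mathbb{C}_{m}$ is identified with $\mathbb{C}$ by $\alpha+m\beta\mapsto\alpha+i\beta$. Finally, to see $H^{Jm}_{\pm}\neq\{0\}$, choose $n\in\mathbb{S}$ with $nm=-mn$ (possible since $\mathbb{S}$ is the unit sphere of the $3$-dimensional space $\mathrm{Im}(\mathbb{H})$); for $x\in H^{Jm}_{\pm}$ one computes $J(x\cdot n)=(Jx)\cdot n=\pm(x\cdot m)\cdot n=\pm x\cdot(mn)=\mp x\cdot(nm)=\mp(x\cdot n)\cdot m$, so $x\mapsto x\cdot n$ carries $H^{Jm}_{\pm}$ into $H^{Jm}_{\mp}$. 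This map has square $-I$, hence is a bijection between $H^{Jm}_{+}$ and $H^{Jm}_{-}$; since $H=H^{Jm}_{+}\oplus H^{Jm}_{-}$ and $H\neq\{0\}$, neither summand can be trivial.

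I do not expect a genuine obstacle in this argument; the only points requiring care are the bookkeeping forced by the one-sided scalar action — keeping track of exactly where $m\lambda=\lambda m$ (for $\lambda\in\mathbb{C}_{m}$) and where $mn=-nm$ are used — and the elementary fact, invoked twice, that an element of $\mathbb{H}$ commuting with a fixed $m\in\mathbb{S}$ must lie in $\mathbb{C}_{m}$.
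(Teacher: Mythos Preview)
Your argument is correct and self-contained. Note, however, that the paper does not actually prove this lemma: it is quoted verbatim from \cite[Lemma~3.10]{Ghiloni} without proof, so there is no ``paper's own proof'' to compare against. Your approach---the explicit projectors $x_{\pm}=\tfrac{1}{2}(x\mp(Jx)\cdot m)$, the centralizer argument for $\mathbb{C}_{m}$-valuedness of the inner product, and the bijection $x\mapsto x\cdot n$ to conclude nontriviality---is precisely the standard one (and indeed is essentially the argument in \cite{Ghiloni}). The last step, that right multiplication by $n$ interchanges $H^{Jm}_{+}$ and $H^{Jm}_{-}$, is exactly the content of the paper's Proposition~\ref{antilineariso}, also cited without proof. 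One cosmetic point: your nontriviality conclusion tacitly assumes $H\neq\{0\}$, which the paper (and \cite{Ghiloni}) take as standing hypothesis.
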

\begin{prop}\cite[Proposition 3.11]{Ghiloni}\label{antilineariso}
Let $m,n \in \mathbb{S}$ with $m \cdot n = -n \cdot m$. The map $\Phi \colon H \to H$ defined by $\Phi(x) = x \cdot n$, is a complex anti-linear isomorphism and $\Phi(H^{Jm}_{+}) = H^{Jm}_{-}$.
\end{prop}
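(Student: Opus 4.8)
The plan is to verify the three assertions of the proposition by direct quaternionic computation, using only the right $\mathbb{H}$-module and inner-product axioms, the fact that $n^{2} = -1$ since $n \in \mathbb{S}$, and the anticommutation hypothesis $m\cdot n = -n\cdot m$. No limiting or topological arguments are needed; everything reduces to algebra of quaternionic scalars together with the right $\mathbb{H}$-linearity of $J$.

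First I would note that $\Phi$ is additive and $\mathbb{R}$-linear, which is immediate because right multiplication by the fixed quaternion $n$ distributes over addition and commutes with real scalars. To see that $\Phi$ is a bijection of $H$ onto itself I would compute $\Phi(\Phi(x)) = (x\cdot n)\cdot n = x\cdot n^{2} = -x$, so $\Phi^{2} = -\,\mathrm{id}_{H}$ and hence $\Phi$ is invertible with $\Phi^{-1} = -\Phi$; in particular $\Phi$ is an $\mathbb{R}$-linear isomorphism of $H$. Moreover it is isometric: using $\langle u\cdot q\,|\,v\cdot q\rangle = \bar q\,\langle u\,|\,v\rangle\,q$ (a consequence of axioms (2) and (3)) together with the fact that $\langle x\,|\,x\rangle$ is a nonnegative real number, one gets $\|\Phi(x)\|^{2} = \bar n\,\langle x\,|\,x\rangle\, n = |n|^{2}\,\langle x\,|\,x\rangle = \|x\|^{2}$.

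Next, to establish anti-linearity with respect to the slice $\mathbb{C}_{m}$, I would first record the scalar identity $\lambda\cdot n = n\cdot\overline{\lambda}$ for every $\lambda = \alpha + m\beta \in \mathbb{C}_{m}$: indeed $\lambda n = \alpha n + \beta(mn) = \alpha n - \beta(nm) = n(\alpha - m\beta) = n\overline{\lambda}$, where the hypothesis $m\cdot n = -n\cdot m$ is used exactly once. Substituting this into the definition of $\Phi$ yields $\Phi(x\cdot\lambda) = (x\cdot\lambda)\cdot n = x\cdot(\lambda n) = x\cdot(n\overline{\lambda}) = (x\cdot n)\cdot\overline{\lambda} = \Phi(x)\cdot\overline{\lambda}$, which, combined with additivity, is precisely $\mathbb{C}_{m}$-anti-linearity. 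Together with the previous step, $\Phi$ is a complex anti-linear isomorphism in the required sense.

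Finally, for the identity $\Phi(H^{Jm}_{+}) = H^{Jm}_{-}$, take $x \in H^{Jm}_{+}$, so $Jx = x\cdot m$, and use that $J$ is right $\mathbb{H}$-linear: $J(\Phi(x)) = J(x\cdot n) = (Jx)\cdot n = (x\cdot m)\cdot n = x\cdot(mn) = -x\cdot(nm) = -(x\cdot n)\cdot m = -\Phi(x)\cdot m$, hence $\Phi(x) \in H^{Jm}_{-}$ and $\Phi(H^{Jm}_{+}) \subseteq H^{Jm}_{-}$. The same computation with the roles of the signs reversed gives $\Phi(H^{Jm}_{-}) \subseteq H^{Jm}_{+}$, and then for any $y \in H^{Jm}_{-}$ the element $z := -\Phi(y)$ lies in $H^{Jm}_{+}$ and satisfies $\Phi(z) = -\Phi^{2}(y) = y$, yielding the reverse inclusion $H^{Jm}_{-} \subseteq \Phi(H^{Jm}_{+})$. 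I do not anticipate a genuine obstacle in any of this; the only point requiring care is the bookkeeping of left versus right multiplication, and making sure that the anticommutation $m\cdot n = -n\cdot m$ is recognised as the single hypothesis that simultaneously forces the $\mathbb{C}_{m}$-anti-linearity and the interchange of the two slice Hilbert spaces.
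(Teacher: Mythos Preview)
The paper does not supply its own proof of this proposition; it is simply quoted from \cite[Proposition 3.11]{Ghiloni} without argument, so there is nothing to compare against. Your direct verification is correct and is essentially the standard argument: the only substantive steps are the scalar identity $\lambda n = n\overline{\lambda}$ for $\lambda \in \mathbb{C}_{m}$ (which uses $mn = -nm$) and the right $\mathbb{H}$-linearity of $J$, both of which you handle properly.
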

\begin{note}
	\bf{Througout this article $m \in \mathbb{S}$ is fixed} and $n \in \mathbb{S}$ such that $m \cdot n = - n \cdot m$.
\end{note}
\begin{lemma}\cite[Proposition 3.11]{Ghiloni}\label{innerdirect}
Let $x \in H^{Jm}_{+}$ and $y \in H^{Jm}_{-}$. Then
\begin{equation*}
\left\langle x | y\right\rangle_{\mathbb{H}} + \left\langle y | x \right\rangle_{\mathbb{H}} = 0.
\end{equation*}
\end{lemma}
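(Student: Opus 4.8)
The plan is to reduce the claimed identity to a purely algebraic statement about the single quaternion $q := \langle x \mid y \rangle_{\mathbb{H}}$ and then use that $m \in \mathbb{S}$ satisfies $\overline{m} = -m$ and $m^{2} = -1$. First I would use that $J$ is unitary: since $J^{*}J = I$ we have $J^{*}Jx = x$, and the defining property of the adjoint gives
\[
\langle x \mid y \rangle_{\mathbb{H}} \;=\; \langle J^{*}Jx \mid y \rangle_{\mathbb{H}} \;=\; \langle Jx \mid Jy \rangle_{\mathbb{H}}.
\]

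Next I would substitute the defining relations of the two slice spaces, namely $Jx = x\cdot m$ (because $x \in H^{Jm}_{+}$) and $Jy = -\,y\cdot m = y\cdot(-m)$ (because $y \in H^{Jm}_{-}$). The inner product is right $\mathbb{H}$-linear in the second argument by axiom (2), and conjugate-linear with respect to left multiplication in the first argument — that is, $\langle u\cdot p \mid v \rangle_{\mathbb{H}} = \overline{p}\cdot\langle u \mid v\rangle_{\mathbb{H}}$ — which follows by combining axioms (2) and (3). Hence
\[
\langle Jx \mid Jy \rangle_{\mathbb{H}} \;=\; \langle x\cdot m \mid y\cdot(-m) \rangle_{\mathbb{H}} \;=\; \overline{m}\cdot \langle x \mid y\rangle_{\mathbb{H}}\cdot(-m) \;=\; m\,\langle x \mid y\rangle_{\mathbb{H}}\,m ,
\]
using $\overline{m} = -m$ in the last step. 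Combining the two displays, $q = mqm$.

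It then remains to deduce $q + \overline{q} = 0$ from $q = mqm$, and this is the only step that needs a little care with noncommutativity. Conjugating $q = mqm$ and using $\overline{m} = -m$ gives $\overline{q} = \overline{m}\,\overline{q}\,\overline{m} = m\,\overline{q}\,m$, so $q + \overline{q} = m\,(q+\overline{q})\,m$. But $q + \overline{q} = 2\,\mathrm{Re}(q)$ is a real scalar, hence commutes with $m$, so $m\,(q+\overline{q})\,m = (q+\overline{q})\,m^{2} = -(q+\overline{q})$ because $m^{2} = -1$. Therefore $q + \overline{q} = -(q+\overline{q})$, which forces $\langle x \mid y\rangle_{\mathbb{H}} + \langle y \mid x\rangle_{\mathbb{H}} = q + \overline{q} = 0$, as claimed. (Alternatively, $q = mqm$ yields $mq = -qm$; decomposing $q$ into its real and imaginary parts and using $uv + vu = -2\,(u\cdot v)\in\mathbb{R}$ for imaginary quaternions $u,v$ again forces $\mathrm{Re}(q) = 0$.)

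There is no serious obstacle here: the substantive content of the lemma is exactly the identity $q = mqm$, and the rest is elementary. The one thing to be vigilant about is bookkeeping — which argument of $\langle\cdot\mid\cdot\rangle_{\mathbb{H}}$ is conjugate-linear, on which side each quaternion multiplies, and not inadvertently assuming commutativity when manipulating $m$ and $q$.
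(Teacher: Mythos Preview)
Your proof is correct. The paper does not actually supply its own argument for this lemma --- it is stated as a citation of \cite[Proposition 3.11]{Ghiloni} with no proof given --- so there is nothing to compare your approach against within the paper itself. The route you take (unitarity of $J$ to get $q = mqm$, then the elementary deduction that $\mathrm{Re}(q)=0$) is the natural one and is essentially how the result is established in the cited reference.
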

\begin{eg}
	Let
	\begin{equation*}
	H= \ell^{2}(\mathbb{N}, \mathbb{H})= \big\{(q_{n})_{n \in \mathbb{N}}: \forall n \in \mathbb{N}, q_{n} \in \mathbb{H},\sum_{n\in \mathbb{N}}|q_{n}|^{2} < \infty\big\}.
	\end{equation*}
  The inner product is given by
\begin{equation*}
\left\langle x | y\right\rangle_{\mathbb{H}} = \sum_{n \in \mathbb{N}}\overline{x}_{n}y_{n}, \;\text{for all}\; x = (x_{n}), y = (y_{n}) \in H.
\end{equation*}
Then $H$ is separable with an orthonormal basis $\{e_{n}: n \in \mathbb{N}\}$, where $e_{n}(k) = \delta_{nk}$.
Let
\begin{equation*}
\ell^{2}(\mathbb{N}, \mathbb{C}) = \{(\alpha_{n})_{n \in \mathbb{N}}: \forall n \in \mathbb{N}, \sum_{n\in \mathbb{N}}|\alpha_{n}|^{2} < \infty\}.
\end{equation*}
 The inner product is given by,
\begin{equation*}
\left\langle \alpha | \beta \right\rangle_{\mathbb{C}} = \sum_{n \in \mathbb{N}} \overline{\alpha_{n}} {\beta}_{n}, \; \text{for all}\; \alpha = (\alpha_{n}), \beta = (\beta_{n}) \in \ell^{2}(\mathbb{N},\mathbb{C}).
\end{equation*}
It is a separable complex Hilbert space.

If  $q = q_{0}+q_{1}i+q_{2}j+q_{3}k\in \mathbb{H}$, then there exists $\alpha, \beta \in \mathbb{C}$ such that $q = \alpha + \beta \cdot j$. In fact $\alpha = q_{0}+q_{1} i$ and $\beta = q_{2}+q_{3}i$. So for every $(q_{n}) \in \ell^{2}(\mathbb{N}, \mathbb{H})$, we have two complex sequences $(\alpha_{n}), (\beta_{n})$ such that
\begin{equation*}
(q_{n}) = (\alpha_{n}) + (\beta_{n}) \cdot j
\end{equation*}
and
\begin{equation*}
\|(q_{n})\|^{2} = \|(\alpha_{n})\|^{2} + \|(\beta_{n})\|^{2}.
\end{equation*}
This shows that $(\alpha_{n}), (\beta_{n}) \in \ell^{2}(\mathbb{N}, \mathbb{C})$.  Define $J \colon H \to H$ by
\begin{equation*}
J(\alpha_{n} + \beta_{n} \cdot j) = (\alpha_{n}- \beta_{n}\cdot j) \cdot i, \; \text{for all} \; (q_{n}) \in H.
\end{equation*}
Then $J$ is anti self-adjoint, unitary and we can show that $H^{Ji}_{+} = \ell^{2}(\mathbb{N}, \mathbb{C})$.
\end{eg}

\section{Square root of an unbounded positve operator}
In this section we prove the existence of square root of an unbounded right linear positive operator.
\begin{thm}\label{strictlysqareroot}
	Let $T \colon \mathcal{D}(T) \to H $ be positive. Then there exists a  unique positive square root of $T$.  Moreover, it commutes with every bounded operator that commutes with $T$.
\end{thm}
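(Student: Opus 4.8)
The plan is to reduce to the square-root theorem for bounded positive operators recalled above, by passing to the resolvent of $T$. Since a positive operator is self-adjoint, $T$ is densely defined and closed, so $I+T$ is self-adjoint; the bound $\langle x\,|\,(I+T)x\rangle\ge\|x\|^{2}$ for $x\in\mathcal D(T)$ shows it is injective with closed range, and then $R(I+T)^{\bot}=N(I+T)=\{0\}$ forces $R(I+T)=H$. Hence $C:=(I+T)^{-1}$ is a bounded, injective, positive operator with $0\le C\le I$ and $R(C)=\mathcal D(T)$. By \cite[Theorem~2.18]{Ghiloni}, $C$ and $I-C$ admit positive square roots $C^{1/2},(I-C)^{1/2}\in\mathcal B(H)$, both injective; applying the commutation clause of that theorem first to $C$ and then to $I-C$, these two square roots commute with each other (and with $C$), and $(C^{1/2})^{-1}$, defined on $R(C^{1/2})$, is the positive square root of $I+T$.

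I then set
\[
S:=(I-C)^{1/2}(C^{1/2})^{-1},\qquad\mathcal D(S)=R(C^{1/2}),
\]
which makes sense since $C^{1/2}$ is injective, and I would check in turn: (a) $S$ is densely defined, because $R(C^{1/2})^{\bot}=N(C^{1/2})=\{0\}$; (b) $S$ is symmetric with $\langle x\,|\,Sx\rangle\ge 0$, since for $x=C^{1/2}y$ we have $\langle x\,|\,Sx\rangle=\langle y\,|\,C^{1/2}(I-C)^{1/2}y\rangle$ and $C^{1/2}(I-C)^{1/2}=(I-C)^{1/4}C^{1/2}(I-C)^{1/4}\ge 0$; (c) $S$ is self-adjoint: $(I+S)C^{1/2}=C^{1/2}+(I-C)^{1/2}$ is an everywhere defined bounded self-adjoint operator whose square is $I+2C^{1/2}(I-C)^{1/2}\ge I$, so $C^{1/2}+(I-C)^{1/2}$ is bounded below, hence onto, and therefore $R(I+S)\supseteq R\big((I+S)C^{1/2}\big)=H$; a symmetric operator with $R(I+S)=H$ is self-adjoint, since $(I+S)^{-1}$ is then a bounded, everywhere defined, symmetric --- hence self-adjoint --- operator, and so is its inverse $I+S$, whence $S$ is self-adjoint; (d) $S^{2}=T$: for $x=Cy\in\mathcal D(T)$ one has $x=C^{1/2}(C^{1/2}y)\in\mathcal D(S)$, $Sx=(I-C)^{1/2}C^{1/2}y=C^{1/2}\big((I-C)^{1/2}y\big)\in\mathcal D(S)$, and $S(Sx)=(I-C)y=y-Cy=(I+T)Cy-Cy=T(Cy)=Tx$, so $T\subseteq S^{2}$; since $S$ is self-adjoint, $S^{2}=S^{*}S$ is self-adjoint, and a self-adjoint operator has no proper symmetric extension, so $S^{2}=T$. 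Thus $S$ is a positive square root of $T$.

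For the commutation clause: if $B\in\mathcal B(H)$ commutes with $T$, then $B(I+T)\subseteq(I+T)B$, whence $BC=CB$, and \cite[Theorem~2.18]{Ghiloni} gives $BC^{1/2}=C^{1/2}B$ and $B(I-C)^{1/2}=(I-C)^{1/2}B$; so $B$ maps $R(C^{1/2})=\mathcal D(S)$ into itself and $SBx=(I-C)^{1/2}BC^{1/2}y=B(I-C)^{1/2}C^{1/2}y=BSx$ for $x=C^{1/2}y$, i.e.\ $SB\subseteq BS$. For uniqueness, let $S'$ be another positive square root and put $P:=(I+S)^{-1}$, $Q:=(I+S')^{-1}\in\mathcal B(H)$, both positive and injective, with $R(P^{2})=\mathcal D(S^{2})=\mathcal D(T)=\mathcal D((S')^{2})=R(Q^{2})$. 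One checks that $Q$ commutes with $S'$ on $\mathcal D(S')$, hence with $T=(S')^{2}$, hence with $C$, $C^{1/2}$ and $(I-C)^{1/2}$, and therefore with $P=C^{1/2}\big(C^{1/2}+(I-C)^{1/2}\big)^{-1}$. On $\mathcal D(T)$ the identities $T=(P^{-1}-I)^{2}=P^{-2}-2P^{-1}+I$ and $T=Q^{-2}-2Q^{-1}+I$ give $P^{-2}-2P^{-1}=Q^{-2}-2Q^{-1}$; multiplying by the commuting bounded operator $P^{2}Q^{2}$ yields $Q^{2}(I-2P)=P^{2}(I-2Q)$, that is $(Q-P)\big(P+Q-2PQ\big)=0$. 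Writing $M:=P+Q-2PQ=P(I-Q)+Q(I-P)\ge 0$, on $\overline{R(M)}$ we get $Q=P$, while on $N(M)$ both $\langle x\,|\,P(I-Q)x\rangle$ and $\langle x\,|\,Q(I-P)x\rangle$ vanish, so by injectivity of $P$ and $Q$ we get $Px=Qx=x$, again $Q=P$; hence $P=Q$ on $\overline{R(M)}\oplus N(M)=H$, so $S'=S$.

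I expect the main obstacle to be part (c) --- verifying that the constructed $S$ is genuinely self-adjoint and not merely symmetric and non-negative --- together with the domain bookkeeping throughout (that the compositions in (d) really land in $R(C^{1/2})$, that $\mathcal D(T)$ is used correctly as $R(C)$, and that $Q$ commutes with the unbounded operator $S'$ and hence with $T$ in the uniqueness step). Once the reduction $T\leftrightarrow C$ is set up and the commutation clause of \cite[Theorem~2.18]{Ghiloni} is invoked, the remaining verifications are essentially formal.
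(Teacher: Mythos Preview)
Your construction is essentially the paper's, repackaged: the paper (following Wouk) first treats the strictly positive case by inverting, taking the bounded square root, and inverting back, and then in the general case writes the square root of $T$ as $S^{1/2}C$ with $S=T(I+T)^{-1}$ and $C=(I+T)^{1/2}$ obtained from Case~1. In your notation $C_{\text{yours}}=(I+T)^{-1}$, so the paper's $S$ is your $I-C$, and the paper's $C$ is your $(C^{1/2})^{-1}$; hence both of you produce the very same operator $(I-C)^{1/2}(C^{1/2})^{-1}$. The difference is organizational: you bypass the separate strictly-positive case and instead verify self-adjointness directly by showing $R(I+S)=H$, which is a clean addition the paper leaves implicit.

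Where you genuinely diverge is uniqueness. The paper's Case~2 simply says ``follows same as in Case~1'', but Case~1's argument relies on invertibility of the square root, which is unavailable when $T$ is merely positive; so that step is at best a sketch. Your argument via $P=(I+S)^{-1}$, $Q=(I+S')^{-1}$, the identity $(Q-P)(P+Q-2PQ)=0$, and the decomposition $H=\overline{R(M)}\oplus N(M)$ is a complete replacement and is correct; just be aware that it leans on $P$ and $Q$ commuting, which you obtain from $Q$ commuting with $T$ and then with $C$, $C^{1/2}$, $(I-C)^{1/2}$ via \cite[Theorem~2.18]{Ghiloni}. One small slip: you assert that $(I-C)^{1/2}$ is injective, but $N\big((I-C)^{1/2}\big)=N(I-C)=N(T)$, which need not be trivial. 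Fortunately you never use this injectivity (only that of $C^{1/2}$, which does hold), so the argument is unaffected---just delete the claim.
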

\begin{proof} This proof is based on \cite[Theorem 1]{Wouk}. We devide this  proof into two cases. First we prove the result when $T$ is strictly positive, later we prove it for positive operator.
	
	Case$(1):$ Assume that $T$ is strictly positive. Then
	\begin{equation*}
	\left\langle x | Tx\right\rangle \geq \lambda \left\langle x | x\right\rangle,\; \text{for some}\; \lambda > 0 \; \text{and \ for all} \; x \in \mathcal{D}(T).
	\end{equation*}
	 By the Caucy-Schwarz inequality \cite[Proposition 2.2]{Ghiloni}, we have
	\begin{equation} \label{boundedbelow}
	\|Tx\| \geq \lambda \|x\|,\; \text{ for all} \; x \in \mathcal{D}(T).	\end{equation}
	Since $T$ is positive and bounded below by Equation $(\ref{boundedbelow})$, we conclude that $T^{-1}$ exists and it is bounded. Moreover, $T^{-1}$ is positive
	\begin{equation*}
	\left\langle y | T^{-1}y\right\rangle = \left\langle Tx | x\right\rangle \geq \lambda \|x\|^{2}\geq 0, \; \text{for all}\; y \in H.
	\end{equation*}

	  Let $S:= T^{-1}$.	By \cite[Theorem 2.18]{Ghiloni}, $S$ has  unique positive  square root $S^{\frac{1}{2}}$ with $\|S^{\frac{1}{2}}\| \leq \frac{1}{\sqrt{\lambda}}$. Moreover, $S^{\frac{1}{2}}$ commutes with every bounded operator that commutes with $S$. Since $S$ commutes with every bounded operator that commutes with $T$, so does $S^{\frac{1}{2}}$. Clearly $R(S) \subset R(S^{\frac{1}{2}})$ and $N(S^{\frac{1}{2}}) = N(S) = N(T^{-1}) = \{0\}$. Then  $S^{\frac{1}{2}}$ is one to one from $H$ onto $R(S^{\frac{1}{2}})$. Thus  $S^{\frac{1}{2}}$ has unbounded inverse say $C$ with domain  $\mathcal{D}(C) = R(S^{\frac{1}{2}})$. We show that $\mathcal{D}(C^{2}) = \mathcal{D}(T)$ and $C^{2}= T$. If $x \in \mathcal{D}(T)$ then $Tx = y$ implies $x= S^{\frac{1}{2}} S^{\frac{1}{2}} y$, hence $C^{2}x = y$. Therefore $\mathcal{D}(T) \subseteq \mathcal{D}(C^{2})$.
	
	  Conversely if $x \in \mathcal{D}(C^{2})$ then $y := Cx \in \mathcal{D}(C)$. That is $x = S^{\frac{1}{2}}y$ and $C^{2}x = Cy = z$. This implies $S^{\frac{1}{2}} y = x$ and  $S^{\frac{1}{2}}z = y$. Thus $z = Tx$. This shows that $\mathcal{D}(T) = \mathcal{D}(C^{2})$ and $C^{2}x = (S^{\frac{1}{2}})^{-2}x = S^{-1}x = Tx$, for all $x \in \mathcal{D}(T)$. In fact $C$ is positive
	  \begin{equation*}
	  \left\langle S^{\frac{1}{2}}x|CS^{\frac{1}{2}}x\right\rangle = \left\langle S^{\frac{1}{2}}x|x\right\rangle \geq 0, \; \text{for all}\;  S^{\frac{1}{2}}x\in R(S^{\frac{1}{2}}) = \mathcal{D}(C).
	  \end{equation*}
	
As $T$ is strictly positive, $T$ is invertible so is  $C$.  Since $C$ being positive and invertible, it must be strictly positive.

 Since $C$ commutes with every bounded operator that commutes with $S^{\frac{1}{2}}$ so $C$ commutes with every bounded operator that commutes with $T$. 	It is enough to show the uniqueness. If $L$ is any positive square root of $T$ then for all $x \in \mathcal{D}(T)$
	\begin{equation*}
	\left\langle x | Tx\right\rangle = \left\langle x | L^{2}x\right\rangle = \left\langle Lx | Lx\right\rangle \geq \lambda \left\langle x | x \right\rangle .
	\end{equation*}
	So $L$ has a bounded inverse which is positive. Then $(L^{-1})^{2}Tx = x$, for all $x\in \mathcal{D}(T)$, where as $T(L^{-1})^{2}x = x,$ for all $x \in H$. Thus $S = (L^{-1})^{2}$. By the uniqueness of square root of $S$, we have $S^{\frac{1}{2}} = L^{-1}$. Therefore $L = C$.
	
	\noindent Case$(2):$ Assume that  $T$ is positive. Define
	\begin{equation*}
	S := T(I+T)^{-1}
	\end{equation*}
	We show that $S$ is a bounded positive operator. Since $T$ is positive $I+T$ is bounded below. That is
	\begin{equation*}
	\|x\|\|(I+T)x\| \geq \left\langle x | (I+T)x\right\rangle =  \left\langle x | x\right\rangle+\left\langle x | Tx\right\rangle \geq \|x\|^{2}, \;\text{for all} \; x \in \mathcal{D}(T) = \mathcal{D}(I+T).
	\end{equation*}
	Then $(I+T)^{-1}$ exists and it is bounded with $\|(I+T)^{-1}\| \leq 1$. The operator $S$ can be represented as
	\begin{equation*}
	S = T(I+T)^{-1} = (I+T-I)(I+T)^{-1} = I - (I+T)^{-1}.
	\end{equation*}
	By using above representation, we show that $S$ is bounded positive operator
	\begin{equation*}
	\|Sx\| \leq \|x\| + \|(I+T)^{-1}x\| \leq 2 \|x\|, \; \text{for all}\; x \in H.
	\end{equation*}
	
	By Case $(1)$, the operator $I+T$ has positive square root say $C$,  which commutes with every bounded operator that commutes with $I+T$. Hence $C$ commutes with every bounded operator that commutes with $(I+T)^{-1}$. Since $S$ commutes with $I+T$ so it commutes with both $(I+T)^{-1}$ and $C$. It is clear that $\mathcal{D}(C^{2}) = \mathcal{D}(I+T) = \mathcal{D}(T)$. Thus for every $x\in \mathcal{D}(T)$, we have
	\begin{equation*}
	(S^{\frac{1}{2}} C)^{2}x = Tx.
	\end{equation*}
	
	Since $S^{\frac{1}{2}}C$ is positive operator, we conclude that  $(S^{\frac{1}{2}} C)$ is positive square root of $T$. Also it commutes with every bounded operator that commutes with $T$. The uniqueness of square root  follows same as in the Case $(1)$.
\end{proof}
\begin{cor}
Let $T \colon \mathcal{D}(T) \to H$ be right $\mathbb{H}$ - linear operator. Then there exists a unique posiitive square root of $T^{*}T$ called modulus of $T$ denoted by $|T|$. Moreover, $|T|$ commutes with every bounded operator that commutes with $T$.
\end{cor}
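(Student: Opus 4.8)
The plan is to obtain the statement by applying Theorem \ref{strictlysqareroot} to the operator $T^{*}T$. For this the hypothesis has to be read as ``$T$ densely defined and closed'' (as in the abstract), since otherwise $T^{*}$, and a fortiori $T^{*}T$ as a densely defined positive operator, need not be meaningful.

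The first and main task is the quaternionic analogue of von Neumann's theorem: if $T$ is densely defined and closed, then $T^{*}T$ is densely defined, self-adjoint and positive. I would transcribe the classical proof. On $H\times H$ take the unitary $V(x,y)=(-y,x)$; using only the defining relation $\langle x|Ty\rangle=\langle T^{*}x|y\rangle$ of the adjoint together with closedness of $G(T)$, one verifies $\big(G(T)\big)^{\bot}=V\,G(T^{*})$, hence $H\times H=G(T)\oplus V\,G(T^{*})$. Decomposing $(h,0)$ for arbitrary $h\in H$ along this sum yields a unique $x\in\mathcal{D}(T)$ with $Tx\in\mathcal{D}(T^{*})$ and $(I+T^{*}T)x=h$; the elementary estimate
\[
\|x\|^{2}\le\langle x|x\rangle+\|Tx\|^{2}=\langle x|(I+T^{*}T)x\rangle=\langle x|h\rangle\le\|x\|\,\|h\|
\]
then shows $(I+T^{*}T)^{-1}$ is everywhere defined, bounded with norm $\le1$, positive and self-adjoint; consequently $\mathcal{D}(T^{*}T)=R\big((I+T^{*}T)^{-1}\big)$ is dense and $T^{*}T$ is self-adjoint (it differs by $I$ from the inverse of a bounded injective positive self-adjoint operator). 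Positivity is immediate, since $\langle x|T^{*}Tx\rangle=\langle Tx|Tx\rangle=\|Tx\|^{2}\ge0$ for $x\in\mathcal{D}(T^{*}T)$. If one prefers, this whole step can be quoted from \cite{Alpay} instead.

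With $T^{*}T$ positive, Theorem \ref{strictlysqareroot} applies and furnishes a unique positive operator $|T|:=(T^{*}T)^{\frac{1}{2}}$ with $|T|^{2}=T^{*}T$, and it asserts that $|T|$ commutes with every bounded operator commuting with $T^{*}T$. To finish, I would show that if $B\in\mathcal{B}(H)$ commutes with $T$ and with $T^{*}$ (both are needed: already for bounded $T$, commuting with $T$ alone does not force commuting with $T^{*}T$), then $B$ commutes with $T^{*}T$: a short computation with the adjoint relation gives $B\,\mathcal{D}(T^{*})\subseteq\mathcal{D}(T^{*})$ and $T^{*}B=BT^{*}$ there, and combining this with $B\,\mathcal{D}(T)\subseteq\mathcal{D}(T)$, $TB=BT$, one gets $B\,\mathcal{D}(T^{*}T)\subseteq\mathcal{D}(T^{*}T)$ and $T^{*}TB=BT^{*}T$ on $\mathcal{D}(T^{*}T)$. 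Theorem \ref{strictlysqareroot} then gives $|T|B=B|T|$, and its uniqueness clause gives uniqueness of $|T|$.

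The step I expect to be the real obstacle is the first one: setting up the von Neumann decomposition correctly in a right $\mathbb{H}$-module. The computation only transcribes the complex proof, but one must be attentive to the conjugate-linearity conventions for $\langle\,\cdot\,|\,\cdot\,\rangle$, to $V$ being $\mathbb{H}$-linear and unitary on $H\times H$, and to the availability of orthogonal complements there; once $T^{*}T$ is secured as a densely defined positive operator, everything else is formal and rests only on Theorem \ref{strictlysqareroot}.
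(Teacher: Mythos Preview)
Your approach coincides with the paper's: verify that $T^{*}T$ is positive and invoke Theorem~\ref{strictlysqareroot}. The paper's proof is only two lines---it writes $\langle x\,|\,T^{*}Tx\rangle=\|Tx\|^{2}\ge0$ and immediately applies the theorem, taking for granted (or implicitly citing \cite{Alpay}) that $T^{*}T$ is a bona fide densely defined positive operator. Your von~Neumann argument supplies what the paper omits and is correct as written; as you note, it could equally well be quoted.

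One point worth recording: your caveat about the ``Moreover'' clause is well taken. Theorem~\ref{strictlysqareroot} applied to $T^{*}T$ yields only that $|T|$ commutes with every bounded $B$ commuting with $T^{*}T$, and---as you observe---$TB\subseteq BT$ alone does not force $T^{*}TB\subseteq BT^{*}T$. The paper's proof does not address this gap; your proposed fix (require $B$ to commute with both $T$ and $T^{*}$, which is also the hypothesis used in Theorem~\ref{boundedexistence}(c) and Theorem~\ref{Existence}(\ref{commutingpolarnormal})) is the natural repair. So your proof is not merely the same as the paper's---it is more complete.
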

\begin{proof}
It is clear that $T^{*}T$ is positive operator, that is
\begin{equation*}
\left\langle x | T^{*}Tx\right\rangle = \left\langle Tx|Tx\right\rangle = \left\langle T^{*}Tx|x\right\rangle \geq 0, \; \text{for all}\; x \in \mathcal{D}(T^{*}T).
\end{equation*}
Hence the result follows from Theorem \ref{strictlysqareroot}.
\end{proof}
\section{Existence of polar decomposition}
In this section we prove the existence of polar decomposition of a densely defined closed right $\mathbb{H}$- linear operator in a quaternionic Hilbert space $($see Theorem \ref{Existence}$)$.

We recall the notion of the polar decomposition of a bounded linear operator on complex Hilbert spaces.

\begin{thm}\label{classical}\cite[Theorem 3.2.17]{Pedersen}
	If $H$ is complex Hilbert space and let $A \colon H \to H $ be bounded. Then there is a unique partial isometry  $U_{0} \colon H \to H $ with $N(U_{0}) = N(A)$ and  $A = U_{0}|A|$ . In particular, $U_{0}^{*}U_{0}|A| = |A|, U_{0}^{*}A = |A|$, and $U_{0}U_{0}^{*}A= A$.
\end{thm}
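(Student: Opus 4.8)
The plan is to build $U_{0}$ directly from the intended factorization, exactly as in the complex scalar case. The crucial observation is the norm identity
\begin{equation*}
\||A|x\|^{2} = \left\langle |A|x \mid |A|x \right\rangle = \left\langle |A|^{2}x \mid x \right\rangle = \left\langle A^{*}Ax \mid x \right\rangle = \|Ax\|^{2} \qquad (x \in H),
\end{equation*}
which uses only that $|A|$ is self-adjoint with $|A|^{2} = A^{*}A$. From this, the assignment $|A|x \mapsto Ax$ is first shown to be well defined on $R(|A|)$: if $|A|x = |A|y$ then $\|A(x-y)\| = \||A|(x-y)\| = 0$, so $Ax = Ay$. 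The same identity shows this map is isometric on $R(|A|)$, hence it extends uniquely to an isometry from $\overline{R(|A|)}$ onto a closed subspace of $H$.

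Next I would record the two standard facts $N(A) = N(A^{*}A) = N(|A|)$ (the first equality from $\|Ax\|^{2} = \left\langle A^{*}Ax \mid x\right\rangle$, the second from positivity of $|A|$ via $\left\langle |A|^{2}x\mid x\right\rangle = \||A|x\|^{2}$) and $\overline{R(|A|)} = N(|A|)^{\bot} = N(A)^{\bot}$, using self-adjointness of $|A|$ and the projection theorem in $H$. This yields the orthogonal decomposition $H = N(A)^{\bot} \oplus N(A)$. I then define $U_{0}$ to be the isometric extension above on $N(A)^{\bot} = \overline{R(|A|)}$ and $U_{0} = 0$ on $N(A)$. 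By construction $U_{0}$ is a partial isometry with initial space $N(A)^{\bot}$ and $N(U_{0}) = N(A)$, and for every $x$ one has $U_{0}|A|x = Ax$ because $|A|x \in R(|A|) \subseteq \overline{R(|A|)}$; thus $A = U_{0}|A|$.

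For the supplementary identities I would use that, for a partial isometry, $U_{0}^{*}U_{0}$ is the orthogonal projection onto the initial space $\overline{R(|A|)}$ and $U_{0}U_{0}^{*}$ is the projection onto the final space $\overline{R(U_{0})} = \overline{R(A)}$, together with the identity $U_{0}U_{0}^{*}U_{0} = U_{0}$. Since $R(|A|) \subseteq \overline{R(|A|)}$ we get $U_{0}^{*}U_{0}|A| = |A|$; then $U_{0}^{*}A = U_{0}^{*}U_{0}|A| = |A|$; and $U_{0}U_{0}^{*}A = U_{0}U_{0}^{*}U_{0}|A| = U_{0}|A| = A$. Finally, for uniqueness: if $V$ is another partial isometry with $N(V) = N(A)$ and $A = V|A|$, then $V|A|x = U_{0}|A|x$ for all $x$, so $V = U_{0}$ on $R(|A|)$ and, by continuity, on $\overline{R(|A|)} = N(A)^{\bot}$; on $N(A) = N(V) = N(U_{0})$ both operators vanish; hence $V = U_{0}$ on all of $H = N(A)^{\bot} \oplus N(A)$.

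I do not expect a genuine obstacle at this (complex, bounded) level: everything reduces to the norm identity together with the projection theorem and the elementary calculus of partial isometries. The point where real care will be needed is the passage to the quaternionic and unbounded setting pursued in the sequel, where one must replace the scalar square root by Theorem \ref{strictlysqareroot}, verify that $\left\langle \cdot \mid \cdot \right\rangle$-orthogonality and the projection theorem behave as expected on right $\mathbb{H}$-modules, and keep track of domains so that $U_{0}|T|$ and $T$ coincide as operators (including domains); the quaternionic bounded transform is what is intended to make that step tractable.
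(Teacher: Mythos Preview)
Your argument is correct and is precisely the standard construction (well-definedness of $|A|x\mapsto Ax$ from the norm identity, isometric extension to $\overline{R(|A|)}=N(A)^{\bot}$, then zero on $N(A)$), together with the usual partial-isometry calculus for the three displayed identities and the obvious uniqueness argument. Note, however, that the paper does not give its own proof of this statement: Theorem~\ref{classical} is simply quoted from \cite[Theorem~3.2.17]{Pedersen} as background for the complex bounded case, so there is nothing to compare against beyond observing that your proof is exactly the classical one found there.
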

 Through out this article $U_{0}$ represents patial isometry as in Theorem \ref{classical}, the polar decomposition. Now we recall existence of polar decomposition of bounded right linear operator on quaternionic Hilbert spaces \cite{Ghiloni}.

\begin{thm}\cite[Theorem 2.20]{Ghiloni}\label{boundedexistence}
	Let $T \in \mathcal{B}(H)$. Then there exists, and are unique, two operators $W$ and $P$ in $\mathcal{B}(H)$ such that:
	\begin{enumerate}[(i)]
	\item $T= WP$
	\item $P \geq 0$
	\item $N(P) \subseteq N(W), \; ($ in fact $N(P) = N(W) )$
	\item $W$ is isometry on $N(P)^{\bot}$ ; that is, $\|Wx\| = \|x\|$, for every $x \in N(P)^{\bot}$.
\end{enumerate}
	\noindent Furthermore, $W$ and $P$ have the following additional properties:
	\begin{enumerate}[(a)]
	\item $P = |T|$
	\item If $T$ is normal, then $W$ defines a unitary operator in $\mathcal{B}(\overline{R(T)})$
	\item If $T$ is normal, then $W$ commutes with $|T|$ and with all the operators in $\mathcal{B}(H)$ commuting with both $T$ and $T^{*}$.
	\item If $T$ is self-adjoint, then $W$ is self-adjoint
	\item If $T$ is anti self-adjoint, then $W$ is anti self-adjoint. 	
	\end{enumerate}
	\end{thm}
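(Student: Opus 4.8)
The plan is to run the classical construction of the polar decomposition (Theorem~\ref{classical}) inside the quaternionic module $H$, checking at every step that right $\mathbb{H}$-linearity and the quaternionic inner-product identities are preserved. First I would set $P:=|T|=(T^{*}T)^{1/2}$, which exists and is bounded and positive by the quaternionic square root theorem \cite[Theorem~2.18]{Ghiloni}; this already gives properties (ii) and (a). The key identity is
\[
\|Px\|^{2}=\langle x\,|\,P^{2}x\rangle=\langle x\,|\,T^{*}Tx\rangle=\langle Tx\,|\,Tx\rangle=\|Tx\|^{2},\qquad x\in H,
\]
so $\|Px\|=\|Tx\|$ for every $x$; in particular $N(P)=N(T)$.

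Next I would define $W$ on $R(P)$ by $W(Px):=Tx$. This is well defined, since $Px=Px'$ forces $\|T(x-x')\|=\|P(x-x')\|=0$, and it is right $\mathbb{H}$-linear because $T$ and $P$ are. By the displayed identity $W$ is isometric on $R(P)$, hence extends uniquely to an isometric right $\mathbb{H}$-linear map on $\overline{R(P)}$. Since $P$ is self-adjoint, \cite[Theorem~2.3]{Alpay} gives $\overline{R(P)}=\overline{R(P^{*})}=N(P)^{\bot}$, and I would then extend $W$ to all of $H$ by declaring it $0$ on $N(P)=\overline{R(P)}^{\bot}$, i.e. $W:=W\,P_{\overline{R(P)}}$. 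Then $W\in\mathcal{B}(H)$ with $\|W\|\le 1$; $N(W)=N(P)$ gives (iii); $W$ is isometric on $N(P)^{\bot}$ gives (iv); and $WPx=W(Px)=Tx$ for all $x$ gives (i).

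For uniqueness, suppose $W',P'\in\mathcal{B}(H)$ also satisfy (i)--(iv). Since $W'$ is isometric on $N(P')^{\bot}=\overline{R(P')}$ and vanishes on $N(P')$, polarization of the inner product forces $W'^{*}W'=P_{\overline{R(P')}}$, hence $W'^{*}W'P'=P'$ and therefore $T^{*}T=(W'P')^{*}(W'P')=P'W'^{*}W'P'=P'^{2}$. Uniqueness of the positive square root \cite[Theorem~2.18]{Ghiloni} yields $P'=|T|=P$, and then $W'P=T=WP$ forces $W'=W$ on $R(P)$, hence on $\overline{R(P)}=N(P)^{\bot}$ by continuity, and trivially on $N(P)=N(P')$; so $W'=W$.

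It remains to establish the refinements. If $T$ is normal, the computation above applied to $TT^{*}=T^{*}T$ gives $\|T^{*}x\|=\|Tx\|$, whence $\overline{R(T)}=N(T^{*})^{\bot}=N(T)^{\bot}=\overline{R(P)}$; as the range of an isometry is closed, $W$ restricts to a surjective isometry, i.e.\ a unitary, on $\overline{R(T)}$, which is (b). From $WP^{2}W^{*}=TT^{*}=P^{2}$ one deduces $WP^{2}=P^{2}W$, hence $WP=PW$ by \cite[Theorem~2.18]{Ghiloni}; and if $B\in\mathcal{B}(H)$ commutes with $T$ and $T^{*}$, it commutes with $T^{*}T$, hence with $P$ by the same theorem, and then $BWPx=BTx=TBx=WPBx=WBPx$ together with $B$-invariance of $N(P)$ gives $BW=WB$, proving (c). Finally (d) and (e) follow by taking adjoints in $T=WP$: for $T=\pm T^{*}$ one gets $WP=\pm PW^{*}$, and since $W$ commutes with $P$ and has coinciding initial and final spaces in these cases, this forces $W=\pm W^{*}$. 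The only genuinely new point beyond transcribing the complex proof is bookkeeping --- every linearity and adjoint manipulation must use right scalars and the quaternionic Cauchy--Schwarz and polarization identities --- together with the fact that the refinements (b)--(e) all reduce to the identification $\overline{R(T)}=N(T)^{\bot}$ for normal $T$ and to the commutation clause of the square root theorem \cite[Theorem~2.18]{Ghiloni}, which I therefore expect to be the load-bearing step.
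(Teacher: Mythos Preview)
The paper does not prove this theorem at all: it is quoted verbatim as \cite[Theorem~2.20]{Ghiloni} and used as an input to Theorem~\ref{Existence}, so there is no argument in the paper to compare your proposal against.

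That said, your proposal is the standard construction and is correct. The existence and uniqueness portions go through word for word in the quaternionic setting once one has the positive square root \cite[Theorem~2.18]{Ghiloni}, and your reductions of (b) and (c) to the identity $\overline{R(T)}=N(T)^{\bot}$ for normal $T$ together with the commutation clause of the square root theorem are exactly right. The only place that is slightly compressed is the last step for (d) and (e): from $PW=\pm PW^{*}$ you should spell out that $W\mp W^{*}$ has range contained in $N(P)=N(T)$, while in the normal case both $W$ and $W^{*}$ have range contained in $\overline{R(T)}=N(T)^{\bot}$, so $W\mp W^{*}=0$ on $\overline{R(T)}$ and trivially on $N(T)$; the phrase ``coinciding initial and final spaces\ldots forces $W=\pm W^{*}$'' is doing this work implicitly.
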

A densely defined closed right $\mathbb{H}$- linear operator in a quaternionic Hilbert space $H$ can be transformed to a bounded linear operator on $H$ via the  bounded transform.

\begin{thm}\cite[Theorem 6.1]{Alpay} \label{boundedtransform}
	Let $T \colon \mathcal{D}(T) \to H$ be densely defined closed right linear operator in $H$. The operator $Z_{T} := T(I+T^{*}T)^{-\frac{1}{2}}$ has the following properties:
	\begin{enumerate}
	\item $Z_{T} \in \mathcal{B}(H), \; \|Z_{T}\| \leq 1 $ and $
	T = Z_{T} (I-Z_{T}^{*}Z_{T})^{-\frac{1}{2}}$
	
\item $Z_{T}^{*} = Z_{T^{*}}$
\item If $T$ is normal, then $Z_{T}$ is normal.	
	\end{enumerate}
\end{thm}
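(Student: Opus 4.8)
The plan is to carry Kaufman's construction of the bounded transform over to the quaternionic setting; apart from bookkeeping with domains, the only genuinely new ingredient is the quaternionic analogue of von Neumann's theorem on $T^{*}T$.

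\emph{Step 1 (the bounded operator $C$).} I would first establish that $T^{*}T$ is densely defined and that $I+T^{*}T$ maps $\mathcal{D}(T^{*}T)$ onto $H$. Surjectivity follows, as in the complex case, from the orthogonal splitting $H\times H=G(T)\oplus V\bigl(G(T^{*})\bigr)$ with $V(a,b)=(-b,a)$ --- valid since $G(T)$ is closed and, by the definition of the adjoint, $V\bigl(G(T^{*})\bigr)=G(T)^{\perp}$ --- because decomposing $(z,0)$ produces a unique $x\in\mathcal{D}(T^{*}T)$ with $(I+T^{*}T)x=z$. Put $C:=(I+T^{*}T)^{-1}$. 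Using $\langle x\,|\,(I+T^{*}T)v\rangle=\langle x\,|\,v\rangle+\langle Tx\,|\,Tv\rangle$ for $x\in\mathcal{D}(T),\,v\in\mathcal{D}(T^{*}T)$, one checks that $C$ is symmetric, hence (being bounded and everywhere defined) self-adjoint and positive, with $\|C\|\le1$; it is injective since $(I+T^{*}T)Cz=z$, so $R(C)=\mathcal{D}(T^{*}T)$ is dense.

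\emph{Step 2 (the square root, and item (1)).} By \cite[Theorem 2.18]{Ghiloni} $C$ has a unique positive square root $C^{1/2}\in\mathcal{B}(H)$, injective with $\|C^{1/2}\|\le1$; set $(I+T^{*}T)^{-1/2}:=C^{1/2}$ and $A:=(C^{1/2})^{-1}$, which is positive self-adjoint with $\mathcal{D}(A)=R(C^{1/2})$ and $A^{2}=I+T^{*}T$. Since $C^{1/2}$ maps $R(C)$ into $R(C)=\mathcal{D}(T^{*}T)$, for $x\in R(C)$ the computation
\begin{equation*}
\|TC^{1/2}x\|^{2}=\langle T^{*}TC^{1/2}x\,|\,C^{1/2}x\rangle=\langle C^{-1/2}x-C^{1/2}x\,|\,C^{1/2}x\rangle=\|x\|^{2}-\|C^{1/2}x\|^{2}
\end{equation*}
(using $\langle C^{-1/2}x\,|\,C^{1/2}x\rangle=\langle C^{1/2}C^{-1/2}x\,|\,x\rangle=\|x\|^{2}$) shows that $TC^{1/2}$ is bounded by $1$ on the dense subspace $R(C)$. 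Approximating an arbitrary $y\in H$ by elements of $R(C)$ and using that $T$ is closed, one gets $C^{1/2}y\in\mathcal{D}(T)$ for every $y$; hence $R(C^{1/2})\subseteq\mathcal{D}(T)$, $Z_{T}=TC^{1/2}\in\mathcal{B}(H)$, $\|Z_{T}\|\le1$, and by continuity $\|Z_{T}y\|^{2}=\|y\|^{2}-\|C^{1/2}y\|^{2}=\langle(I-C)y\,|\,y\rangle$ for all $y$. Polarising (a bounded self-adjoint operator on a quaternionic Hilbert space is determined by its quadratic form) yields $Z_{T}^{*}Z_{T}=I-C$, so $I-Z_{T}^{*}Z_{T}=C$, its positive square root is $C^{1/2}=(I+T^{*}T)^{-1/2}$, and $(I-Z_{T}^{*}Z_{T})^{-1/2}=A$ with $\mathcal{D}(A)=\mathcal{D}(T)$; on $\mathcal{D}(T)$ this gives $Z_{T}(I-Z_{T}^{*}Z_{T})^{-1/2}=TC^{1/2}A=T$, which is (1).

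\emph{Step 3 (items (2) and (3)).} Set $D:=(I+TT^{*})^{-1}\in\mathcal{B}(H)$. For $x\in\mathcal{D}(T)$ one has $Cx\in\mathcal{D}(T^{*}T)$, so $TCx\in\mathcal{D}(T^{*})$ with $T^{*}TCx=x-Cx\in\mathcal{D}(T)$, whence $TCx\in\mathcal{D}(TT^{*})$ and $(I+TT^{*})TCx=TCx+T(x-Cx)=Tx$, i.e.\ $TCx=DTx$; iterating gives $TC^{n}=D^{n}T$ on $\mathcal{D}(T)$. As $C^{1/2}$ and $D^{1/2}$ are norm-limits of one and the same sequence of polynomials without constant term (evaluated at $C$, resp.\ $D$), the closedness of $T$ yields $TC^{1/2}=D^{1/2}T$ on $\mathcal{D}(T)$, so $Z_{T}$ is the bounded closure of $D^{1/2}T$. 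Taking adjoints --- using $(BS)^{*}=S^{*}B^{*}$ for bounded $B$, the inclusion $R(D^{1/2})=R((I+TT^{*})^{-1/2})\subseteq\mathcal{D}(T^{*})$ (Step 2 applied to the closed densely defined operator $T^{*}$), and $(T^{*})^{*}=T$ --- gives $Z_{T}^{*}=(D^{1/2}T)^{*}=T^{*}D^{1/2}=T^{*}(I+TT^{*})^{-1/2}=Z_{T^{*}}$, which is (2). Finally $Z_{T}^{*}Z_{T}=I-C$ and, applied to $T^{*}$, $Z_{T}Z_{T}^{*}=Z_{T^{*}}^{*}Z_{T^{*}}=I-D$; if $T$ is normal then $T^{*}T=TT^{*}$, so $C=D$ and $Z_{T}^{*}Z_{T}=Z_{T}Z_{T}^{*}$, proving (3).

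The main obstacle is Step 1 together with the two ``soft'' facts used afterwards --- the polarisation principle for bounded self-adjoint operators on quaternionic Hilbert spaces, and the fact that the square root supplied by \cite[Theorem 2.18]{Ghiloni} is a uniform limit of polynomials in the operator vanishing at the origin; once these are in hand, (1)--(3) fall out by the same domain bookkeeping as in the classical proof.
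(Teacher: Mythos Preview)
The paper does not prove this theorem; it is quoted verbatim from \cite[Theorem 6.1]{Alpay} and used as a black box in the subsequent arguments (Lemma \ref{null}, Theorem \ref{Existence}, Theorem \ref{unboundedunique}). So there is no ``paper's own proof'' to compare against.

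Your argument is the standard Kaufman construction transported to the quaternionic setting, and it is essentially correct. Two small points worth tightening. First, in Step~2 you show $R(C^{1/2})\subseteq\mathcal{D}(T)$ and then assert $\mathcal{D}(A)=\mathcal{D}(T)$ without justifying the reverse inclusion; this is needed for the \emph{equality} $T=Z_{T}(I-Z_{T}^{*}Z_{T})^{-1/2}$ (not just an extension), and follows from $\mathcal{D}(T)=\mathcal{D}(|T|)=\mathcal{D}\bigl((I+T^{*}T)^{1/2}\bigr)=R(C^{1/2})$. Second, the polarisation step --- that a bounded self-adjoint operator on a quaternionic Hilbert space is determined by its quadratic form --- is correct (if $A=A^{*}$ and $\langle x\,|\,Ax\rangle=0$ for all $x$, then expanding $\langle x+y\cdot q\,|\,A(x+y\cdot q)\rangle$ over $q\in\mathbb{H}$ kills $A$), but since it is one of the places where quaternionic non-commutativity could in principle bite, a one-line justification would be appropriate if you intend this as a stand-alone proof.
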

\begin{lemma}\label{null}
	If $T\colon \mathcal{D}(T)\to H$ be densely defined closed right $\mathbb{H}$- linear operator, then  $N(Z_{T})=N(T)$ and $R(Z_{T})=R(T).$
\end{lemma}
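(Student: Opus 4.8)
The plan is to establish the two equalities $N(Z_T) = N(T)$ and $\overline{R(Z_T)} = \overline{R(T)}$ (the statement as phrased claims $R(Z_T) = R(T)$, but the bounded transform is an operator on all of $H$, so one really proves the closures coincide, or uses the correspondence $x \leftrightarrow (I+T^*T)^{1/2}$ of the ranges) by exploiting the factorization identities in Theorem \ref{boundedtransform}, namely $Z_T = T(I+T^*T)^{-1/2}$ and, going the other way, $T = Z_T(I - Z_T^*Z_T)^{-1/2}$.

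For the null space: first I would observe that $(I+T^*T)^{-1/2}$ is a bounded, injective (indeed strictly positive) operator on $H$ whose range is exactly $\mathcal{D}\big((I+T^*T)^{1/2}\big)$, which contains $\mathcal{D}(T^*T)$ and is a core-type domain; crucially it maps $H$ bijectively onto $\mathcal{D}((I+T^*T)^{1/2})$. If $x \in N(Z_T)$, then $T(I+T^*T)^{-1/2}x = 0$, so setting $y = (I+T^*T)^{-1/2}x$ we get $y \in \mathcal{D}(T)$ with $Ty = 0$, i.e. $y \in N(T)$; then $x = (I+T^*T)^{1/2}y$, and one checks $(I+T^*T)^{1/2}$ fixes $N(T)$ pointwise — indeed for $y \in N(T)$ we have $T^*Ty = T^*(0)=0$ (needs $y\in\mathcal{D}(T^*T)$, which follows since $Ty=0\in\mathcal{D}(T^*)$), so $(I+T^*T)y = y$ and hence $(I+T^*T)^{1/2}y = y$, giving $x = y \in N(T)$. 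Conversely, if $x \in N(T) \subseteq \mathcal{D}(T)$, the same computation shows $(I+T^*T)^{-1/2}x = x$, whence $Z_T x = Tx = 0$. This yields $N(Z_T) = N(T)$.

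For the range: using $T = Z_T(I-Z_T^*Z_T)^{-1/2}$ from Theorem \ref{boundedtransform}(1), every element of $R(T)$ has the form $Z_T\big((I-Z_T^*Z_T)^{-1/2}x\big)$ for $x \in \mathcal{D}(T)$, so $R(T) \subseteq R(Z_T)$. Conversely, from $Z_T = T(I+T^*T)^{-1/2}$, every element of $R(Z_T)$ is $T\big((I+T^*T)^{-1/2}x\big)$ with $(I+T^*T)^{-1/2}x \in \mathcal{D}\big((I+T^*T)^{1/2}\big) \subseteq \mathcal{D}(T)$, so $R(Z_T) \subseteq R(T)$. Hence $R(Z_T) = R(T)$ exactly, which is the stronger statement and matches the claim as written.

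The main obstacle, and the step requiring the most care, is the domain bookkeeping for the unbounded positive operators $(I+T^*T)^{\pm 1/2}$: one must know that $(I+T^*T)^{-1/2}$ is bounded with range $\mathcal{D}((I+T^*T)^{1/2})$, that this domain is contained in $\mathcal{D}(T)$ (so that composing with $T$ is legitimate), and that $(I-Z_T^*Z_T)^{-1/2}$ likewise has range inside $\mathcal{D}(T)$ — all of which is implicitly packaged in the validity of the identities in Theorem \ref{boundedtransform}. Granting those (they are exactly what makes the bounded transform well-defined, via Theorem \ref{strictlysqareroot} and the self-adjointness of $T^*T$), the two inclusions in each direction are immediate from the factorizations, and the null-space argument reduces to the elementary fact that a positive operator and its square root act as the identity on the kernel of $T^*T$, which contains $N(T)$.
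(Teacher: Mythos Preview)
Your proposal is correct, and the range argument is exactly the paper's: both inclusions $R(Z_T)\subseteq R(T)$ and $R(T)\subseteq R(Z_T)$ are read off from the two factorizations $Z_T=T(I+T^*T)^{-1/2}$ and $T=Z_T(I-Z_T^*Z_T)^{-1/2}$ in Theorem~\ref{boundedtransform}.

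Where you differ is the null-space part. You argue directly, using that $(I+T^*T)^{\pm1/2}$ acts as the identity on $N(T)$; this is correct but requires the small spectral fact that if $Sy=y$ for a positive operator $S$ then $S^{1/2}y=y$ (which follows from $(S^{1/2}-I)(S^{1/2}+I)y=0$ and invertibility of $S^{1/2}+I$), plus the domain bookkeeping you flag. The paper instead reuses the range argument: applying it to $T^*$ and invoking $Z_T^*=Z_{T^*}$ from Theorem~\ref{boundedtransform}(2) gives $R(T^*)=R(Z_T^*)$, and taking orthogonal complements yields $N(T)=N(Z_T)$ in one line. Your route is self-contained and avoids appealing to the adjoint identity; the paper's is shorter and sidesteps any eigenvector or domain verification.
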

\begin{proof} By the definition of $Z_{T}$, we have $R(Z_{T}) \subseteq R(T)$.  Since $T = Z_{T}(I-Z_{T}^{*}Z_{T})^{-\frac{1}{2}}$, we have $R(T) \subseteq R(Z_{T})$. Similarly we have $R(T^{*}) = R(Z_{T}^{*})$. Hence $N(T) = N(Z_{T})$. 	
\end{proof}

\begin{thm}\label{Existence}
		Let $T \colon \mathcal{D}(T) \to H$ be densely defined closed right $\mathbb{H}$- linear operator with $\mathcal{D}(T) \subset H$. Then there exists a  partial isometry $U_{0}$ with initial space $N(T)^{\bot}$ and final space $\overline{R(T)}$ such that $T = U_{0}|T|$.  In particular $U_{0}$  satisfying $N(|T|) = N(U_{0})$ is uniquely determined.
Moreover,
		\begin{enumerate}
			\item \label{normalpolar} If $T$ is normal, then $U_{0}$ is unitary operator in $\mathcal{B}(\overline{R(T)})$
			\item \label{commutingpolarnormal} If $T$ is normal, then $U_{0}$ commutes with $|T|$ and with all operators in $\mathcal{B}(H)$ commuting with both $T$ and $T^{*}$
			\item \label{selfadjpolar} If $T$ is self-adjoint, then also $U_{0}$
			\item \label{antiselfpolar} If $T$ is anti self-adjoint, then also $U_{0}$.
		\end{enumerate}
	\end{thm}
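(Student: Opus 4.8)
The plan is to reduce everything to the bounded case by means of the bounded transform $Z_T = T(I+T^*T)^{-\frac{1}{2}}$ of Theorem \ref{boundedtransform}. Since $Z_T \in \mathcal{B}(H)$, I would apply Theorem \ref{boundedexistence} to $Z_T$ to obtain a partial isometry $W$ and a positive operator $P = |Z_T|$ with $Z_T = WP$, $N(W) = N(P)$, and $W$ isometric on $N(P)^{\bot}$, and set $U_0 := W$. By Lemma \ref{null}, $N(Z_T) = N(T)$ and $\overline{R(Z_T)} = \overline{R(T)}$; since $N(P) = N(|Z_T|) = N(Z_T) = N(T)$, the initial space of $U_0$ is $N(T)^{\bot}$ and its final space is $\overline{R(WP)} = \overline{R(Z_T)} = \overline{R(T)}$. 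As $N(|T|) = N(|T|^2) = N(T^*T) = N(T)$, the condition $N(U_0) = N(|T|)$ is automatic, so it remains to prove $T = U_0|T|$.

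The heart of the matter is the identity $|Z_T|\,(I+T^*T)^{\frac{1}{2}} = |T|$. Using $Z_T^* = Z_{T^*}$ (Theorem \ref{boundedtransform}(2)) one computes $Z_T^*Z_T = T^*T(I+T^*T)^{-1} = I - (I+T^*T)^{-1}$, whence $|Z_T|^2 = I - (I+T^*T)^{-1}$ and $(I - Z_T^*Z_T)^{-\frac{1}{2}} = (I+T^*T)^{\frac{1}{2}} =: A$. Set $R := |Z_T|A$, so $\mathcal{D}(R) = \mathcal{D}(A)$ since $|Z_T|$ is bounded. Since $|Z_T|^2$ is a bounded function of $T^*T$, the bounded square-root theorem \cite[Theorem 2.18]{Ghiloni} gives that $|Z_T|$ and $|Z_T|^{\frac{1}{2}}$ commute with $A^{-1} = (I+T^*T)^{-\frac{1}{2}}$, hence map $\mathcal{D}(A)$ into itself and commute with $A$ on $\mathcal{D}(A)$. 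From $(BS)^* = S^*B^*$ for bounded $B$, together with the observation that $|Z_T|x \in \mathcal{D}(A) \iff x \in \mathcal{D}(A)$ (use $x = |Z_T|^2 x + (I+T^*T)^{-1}x$ with $(I+T^*T)^{-1}x \in \mathcal{D}(T^*T) \subseteq \mathcal{D}(A)$), one gets $R^* = A|Z_T| = R$. Moreover $R$ is positive, since for $x \in \mathcal{D}(A)$,
\[
\langle x\,|\,Rx\rangle = \langle\, |Z_T|^{\frac{1}{2}}x\,,\,A\,|Z_T|^{\frac{1}{2}}x\,\rangle \ge 0,
\]
and a direct computation from $|Z_T|^2 = I - (I+T^*T)^{-1}$ gives $\mathcal{D}(R^2) = \mathcal{D}(T^*T)$ with $R^2 = T^*T$ there. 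By the uniqueness of the positive square root (Theorem \ref{strictlysqareroot}), $R = (T^*T)^{\frac{1}{2}} = |T|$; in particular $\mathcal{D}(|T|) = \mathcal{D}(A) = \mathcal{D}(T)$, the last step because Theorem \ref{boundedtransform}(1) gives $T = Z_T(I - Z_T^*Z_T)^{-\frac{1}{2}} = Z_T A$ with $Z_T$ bounded.

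The factorization then follows at once: for $x$ in the common domain $\mathcal{D}(T) = \mathcal{D}(|T|)$,
\[
Tx = Z_T A x = U_0\,|Z_T|\,A x = U_0 R x = U_0 |T| x,
\]
so $T = U_0|T|$. For uniqueness, if $U$ is a partial isometry with $T = U|T|$ and $N(U) = N(|T|)$, then $U$ and $U_0$ agree on $R(|T|)$ and both vanish on $N(|T|)$; since $|T|$ is self-adjoint, $\overline{R(|T|)} = N(|T|)^{\bot}$ and $H = \overline{R(|T|)} \oplus N(|T|)$, so by boundedness $U = U_0$ on $H$.

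Finally, I would obtain (1)--(4) by transporting the corresponding parts of Theorem \ref{boundedexistence} through the transform. If $T$ is normal, then $Z_T$ is normal (Theorem \ref{boundedtransform}(3)), so $U_0 = W$ is unitary in $\mathcal{B}(\overline{R(Z_T)}) = \mathcal{B}(\overline{R(T)})$, which is (1); moreover $U_0$ commutes with $|Z_T|$ and with every bounded operator commuting with both $Z_T$ and $Z_T^*$, and since (for normal $T$) a bounded $B$ commuting with $T$ and $T^*$ commutes with $T^*T$, hence with $(I+T^*T)^{-\frac{1}{2}}$ and with $Z_T, Z_T^*$, while $U_0$ commuting with $|Z_T|$ and $(I+T^*T)^{-\frac{1}{2}}$ forces $U_0|T| \subseteq |T|U_0$, this gives (2). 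If $T = T^*$, then $Z_T^* = Z_{T^*} = Z_T$, so $Z_T$, and hence $U_0$, is self-adjoint, which is (3). If $T^* = -T$, then $Z_T^* = Z_{T^*} = Z_{-T} = -Z_T$, so $Z_T$, and hence $U_0$, is anti self-adjoint, which is (4). The step I expect to be the main obstacle is the identity $R = |T|$ in the second paragraph — in particular, verifying that $R = |Z_T|(I+T^*T)^{\frac{1}{2}}$ is positive with $R^2 = T^*T$, which demands careful bookkeeping of domains for products of bounded and unbounded operators and of the commutation properties of $|T|$ and of the bounded functions of $T^*T$.
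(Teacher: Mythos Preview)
Your argument is correct and follows exactly the same route as the paper: apply Theorem \ref{boundedexistence} to the bounded transform $Z_T$, identify $|T|$ with $|Z_T|(I-Z_T^*Z_T)^{-1/2} = |Z_T|(I+T^*T)^{1/2}$, and then read off $T = U_0|T|$ and parts (1)--(4) from the corresponding properties of $Z_T$. In fact you supply more than the paper does---it simply asserts the identity $|T| = |Z_T|(I-Z_T^*Z_T)^{-1/2}$ without the domain and positivity verification you carry out, and your direct uniqueness argument via $H = \overline{R(|T|)}\oplus N(|T|)$ is a clean alternative to invoking the bounded uniqueness through $Z_T$.
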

	\begin{proof} It is clear from Theorem \ref{boundedtransform} that  $Z_{T}$ is bounded. By Theorem \ref{boundedexistence}, there exists a unique partial isometry $U_{0}$
		with initial space $N(Z_{T})^{\bot}$, final space $\overline{R(Z_{T})}$ such that $Z_{T}= U_{0}|Z_{T}|$ and $ N(|{Z}_{T}|) = N(U_{0})$. Since $|T| = |Z_{T}|(I-Z_{T}^{*}Z_{T})$, we have
		\begin{align*}
		T &= Z_{T}(I-Z_{T}^{*}Z_{T})^{-\frac{1}{2}} = U_{0} |Z_{T}|(I-Z_{T}^{*}Z_{T})^{-\frac{1}{2}} = U_{0} |T|.
		\end{align*}
		Hence  $T = U_{0}|T|$. By Lemma \ref{null} and Theorem \ref{boundedexistence}({\it{iii}}) it follows that $N(|T|) = N(U)$.

		 Proof of  (\ref{normalpolar}): If $T$ is normal then by Theorem \ref{boundedtransform}(3), we have $Z_{T}$ is normal. By Theorem \ref{boundedexistence}(b), $U_{0}$ is unitary operator in $\mathcal{B}(\overline{R(Z_{T})})= \mathcal{B}(\overline{R(T)})$.
		
		 Proof of (\ref{commutingpolarnormal}): Since $Z_{T}$ is normal, by Theorem \ref{boundedexistence}(c), $U_{0}$ commutes with $|Z_{T}|$ and with all operators in $\mathcal{B}(H)$ commuting with both $Z_{T}$ and $Z_{T}^{*}$. We conclude the result by using the fact that $|T|$ commutes with $|Z_{T}|$.
		
		 Proof of (\ref{selfadjpolar}): If $T$ is self-adjoint then by Theorem \ref{boundedtransform}(2), $Z_{T}$ is self-adjoint. By Theorem \ref{boundedexistence}({d}), we have $U_{0}$ is self-adjoint.

		Proof of (\ref{antiselfpolar}): If $T$ is anti self-adjoint, then
		\begin{align*}
		Z_{T}^{*} = Z_{T^{*}} = Z_{-T}
		                      = -T\left(I+(-T)^{*}(-T)\right)^{-\frac{1}{2}}= -T(I-T^{2})^{-\frac{1}{2}}= -Z_{T}.
		\end{align*}
		Hence by Theorem \ref{boundedexistence}(e), we get $U_{0}$ is anti self-adjoint.
	\end{proof}
	
\begin{cor}
Let $T \colon \mathcal{D}(T) \to H$ be right $\mathbb{H}$- linear normal operator which is densely defined and closed. Let $T = V|T|$ be the polar decomposition as in Theorem \ref{Existence}. Then
\begin{enumerate}
	\item $V$ is normal
	\item there exists a unitary $W$ such that $T=W|T|$.
\end{enumerate}
\end{cor}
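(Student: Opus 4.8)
The plan is to derive everything from Theorem \ref{Existence} together with the elementary calculus of partial isometries, with essentially no computation. Write $V=U_{0}$, so that $V$ is the partial isometry with initial space $N(T)^{\perp}$, final space $\overline{R(T)}$, and $N(V)=N(|T|)=N(T)$; since $T$ is closed, $N(T)$ is a closed subspace, so the orthogonal projection $P:=P_{N(T)}$ is available and $I-P=P_{N(T)^{\perp}}$. The first point is that normality of $T$ makes the initial and final spaces of $V$ coincide: by the relation $R(T)^{\perp}=N(T^{*})$ and the fact that $N(T)=N(T^{*})$ for a normal operator, $\overline{R(T)}=N(T^{*})^{\perp}=N(T)^{\perp}$. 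Using the standard partial isometry identities $V^{*}V=P_{N(T)^{\perp}}$ and $VV^{*}=P_{\overline{R(T)}}$, these two projections are then equal, so $V^{*}V=VV^{*}$ and $V$ is normal; equivalently, by Theorem \ref{Existence}(\ref{normalpolar}) one may view $V$ as a unitary on $\overline{R(T)}=N(T)^{\perp}$ direct-summed with the zero operator on the reducing subspace $N(T)$, hence an orthogonal sum of normal operators. This proves (1).

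For (2) I would extend $V$ to a unitary on $H$ by making it act as the identity on $N(T)$: set $W:=V+P$, which lies in $\mathcal{B}(H)$. To see $W$ is unitary, first note the orthogonality relations $PV=VP=PV^{*}=V^{*}P=0$: these follow from $R(V)=\overline{R(T)}=N(T)^{\perp}$, $N(V)=N(T)=R(P)$, and dually from $N(V^{*})=\overline{R(T)}^{\perp}=N(T)$ together with $R(V^{*})\subseteq N(T)^{\perp}$. Since $W^{*}=V^{*}+P$, these relations give
\[
W^{*}W=V^{*}V+P=P_{N(T)^{\perp}}+P_{N(T)}=I,\qquad WW^{*}=VV^{*}+P=P_{N(T)^{\perp}}+P_{N(T)}=I,
\]
so $W$ is unitary. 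Finally $|T|$ is positive, hence self-adjoint, with $N(|T|)=N(T)$, so $\overline{R(|T|)}=N(|T|)^{\perp}=N(T)^{\perp}$; thus $P\,|T|x=0$ for every $x\in\mathcal{D}(|T|)=\mathcal{D}(T)$, and therefore $W|T|x=V|T|x+P|T|x=V|T|x=Tx$. Hence $T=W|T|$ with $W$ unitary, proving (2).

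I do not expect a genuine obstacle here; both parts amount to bookkeeping with orthogonal projections. The one step deserving care is the final identity $T=W|T|$ — one must be sure that replacing $V$ by $V+P$ does not disturb the factorization — and this is exactly what $\overline{R(|T|)}\perp N(T)$ guarantees. It would also be prudent to record at the outset the two facts about quaternionic partial isometries on which the whole argument rests, namely $V^{*}V=P_{N(V)^{\perp}}$ and $VV^{*}=P_{\overline{R(V)}}$, both of which follow routinely from the Section 2 definition together with $V=VV^{*}V$.
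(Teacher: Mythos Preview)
Your argument is correct and follows essentially the same route as the paper: for (1) you use $N(T)=N(T^{*})$ to identify the initial and final projections of $V$, and for (2) you extend $V$ by the identity on $N(T)$ (your $W=V+P$ is exactly the paper's piecewise-defined $W$), with your verification of unitarity and of $W|T|=T$ somewhat more explicit than the paper's.
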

\begin{proof} Since $T$ is normal operator, $N(T) = N(T^{*})$. We use this fact in proving the result.
	
\noindent	Proof of $(1):$ We know that $V$ is partial isometry. This implies
\begin{equation*}
V^{*}V = P_{N(V)^{\bot}}= P_{N(T)^{\bot}} = P_{N(T^{*})^{\bot}} = P_{N(V^{*})^{\bot}} = VV^{*}.
\end{equation*}
Proof of $(2):$
Clearly  $V \colon \overline{R(|T|)} \to R(T)$ is an isometry such that $T=V|T|$. If we define $W$ as
\begin{align*}
Wx &= Vx , \; \text{if} \; x \in \overline{R(|T|)}\\
   &= x, \; \text{if} \; x \in N(T),
\end{align*}
then $W$ is unitary. If  $x \in \mathcal{D}(T)$ then $x = x_{1} + x_{2}$, where $x_{1} \in N(T) $ and $x_{2} \in \overline{R(|T|)} $. Moreover,
\begin{equation*}
T(x_{1}+x_{2}) = V|T|(x_{1}+x_{2}) = V|T|(x_{2}) = W|T|(x_{2}) = W|T|(x_{1} + x_{2}).
\end{equation*}
Hence the result.
	
\end{proof}	
\section{ uniqueness of polar decomposition}

In this section we give necessary and sufficient condition for any decomposition of densely defined closed right $\mathbb{H}$- linear operator in quaternionic Hilbert space $H \; ($ see Theorem \ref{unboundedunique}$)$ as in Equation (\ref{polarform}) to be the polar decomposition.  Throughout this section we assume $H$ to be separable.

First, we prove the result for bounded right $\mathbb{H}$- linear operators on $H$ and then extend it to the  unbounded case.

In case of complex Hilbert spaces the uniqueness of decomposition of bounded linear operator is given in \cite[Theorem 1.2]{ichinose}. In detail, let $A$ be a bounded linear operator on a complex Hilbert space with
\begin{equation}\label{decomposition}
A = U|A|,
\end{equation}
 where $U$ is a partial isometry.  In this case $U$ may not be unique. For example let $P$ be an orthogonal projection onto $N(A)$, and $V \colon N(A) \to R(A)^{\bot}$ be a  partial isometry. Then $U=U_{0}+ VP$ also satisfy Equation (\ref{decomposition}). In fact, it is proved in \cite[Proposition 2.5]{ichinose} that any partial isometry $U$ satisfying Equation (\ref{decomposition}) above can be written as $U=U_0+VP$, where $U_0$ is a partial isometry satisfying Equation(\ref{decomposition}) such that $N(U_{0}) = N(A)$. Thus $U = U_{0}$ if and only if $N(A) = \{0\}$ or $R(A)^{\bot} = \{0\}$.

First we prove some results which are needed for our purpose.
\begin{lemma}
 Let $J \in \mathcal{B}(H)$ be anti self-adjoint and unitary. Suppose $\{f_{k}\}_{k=1}^{\infty}$ is an orthonormal basis in $H$ with $f_{k} = g_{k} + h_{k} \cdot n,$ where $\{g_{k}\}_{k=1}^{\infty}, \{h_{k}\}_{k=1}^{\infty} \subset H^{Jm}_{+}.$ Then $\{g_{k}\}$ is an orthonormal basis in $H^{Jm}_{+}$ if and only if $h_{k}=0,$ for $ k= 1,2,3,\cdots$. Similarly $\{h_{k}\}$ is an orthonormal basis in $H^{Jm}_{+}$ if and only if $g_{k}=0,$ for $k= 1,2,3,\cdots.$
\end{lemma}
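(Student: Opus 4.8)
The plan is to reduce everything to one Pythagorean identity together with the fact that the inner product restricted to each slice $H^{Jm}_{\pm}$ is $\mathbb{C}_{m}$-valued.

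First I would record the basic identity. Since $f_{k}=g_{k}+h_{k}\cdot n$ with $g_{k}\in H^{Jm}_{+}$ and $h_{k}\cdot n\in \Phi(H^{Jm}_{+})=H^{Jm}_{-}$ by Proposition \ref{antilineariso}, Lemma \ref{innerdirect} gives $\left\langle g_{k}\,|\,h_{k}\cdot n\right\rangle+\left\langle h_{k}\cdot n\,|\,g_{k}\right\rangle=0$, so $1=\|f_{k}\|^{2}=\|g_{k}\|^{2}+\|h_{k}\cdot n\|^{2}=\|g_{k}\|^{2}+\|h_{k}\|^{2}$; the last equality holds because $\|x\cdot n\|^{2}=\overline{n}\left\langle x|x\right\rangle n=|n|^{2}\|x\|^{2}$. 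From this the two ``only if'' implications are immediate: if $\{g_{k}\}$ is orthonormal in $H^{Jm}_{+}$ then $\|g_{k}\|=1$, forcing $\|h_{k}\|=0$, i.e. $h_{k}=0$; symmetrically if $\{h_{k}\}$ is orthonormal then $g_{k}=0$.

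For the converses I would isolate the elementary observation: if $\{\varphi_{k}\}$ is an orthonormal basis of $H$ over $\mathbb{H}$ and every $\varphi_{k}$ lies in $H^{Jm}_{\varepsilon}$ (for a fixed sign $\varepsilon$), then $\{\varphi_{k}\}$ is an orthonormal basis of the $\mathbb{C}_{m}$-Hilbert space $H^{Jm}_{\varepsilon}$. Indeed orthonormality is inherited since by Lemma \ref{directsum}(1) the restricted inner product is $\mathbb{C}_{m}$-valued and numerically equal to $\left\langle\cdot|\cdot\right\rangle$; and for $x\in H^{Jm}_{\varepsilon}\subseteq H$ the Fourier expansion $x=\sum_{k}\varphi_{k}\left\langle\varphi_{k}|x\right\rangle$ has all coefficients $\left\langle\varphi_{k}|x\right\rangle\in\mathbb{C}_{m}$, and its partial sums lie in the closed $\mathbb{C}_{m}$-subspace $H^{Jm}_{\varepsilon}$, so $x$ belongs to the $\mathbb{C}_{m}$-closed span of $\{\varphi_{k}\}$. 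Now if $h_{k}=0$ for all $k$, then $f_{k}=g_{k}\in H^{Jm}_{+}$ and $\{f_{k}\}$ is the given orthonormal basis of $H$; applying the observation with $\varepsilon=+$ shows $\{g_{k}\}$ is an orthonormal basis of $H^{Jm}_{+}$. If instead $g_{k}=0$ for all $k$, then $f_{k}=h_{k}\cdot n=\Phi(h_{k})\in H^{Jm}_{-}$, so the observation with $\varepsilon=-$ gives that $\{f_{k}\}$ is an orthonormal basis of $H^{Jm}_{-}$; since $\Phi\colon H^{Jm}_{+}\to H^{Jm}_{-}$ is a bijection with $\left\langle\Phi u\,|\,\Phi v\right\rangle=\overline{n}\left\langle u|v\right\rangle n=\overline{\left\langle u|v\right\rangle}$ for $u,v\in H^{Jm}_{+}$, it carries orthonormal bases to orthonormal bases, so $\{h_{k}\}=\{\Phi^{-1}f_{k}\}$ is an orthonormal basis of $H^{Jm}_{+}$.

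The only real obstacle is bookkeeping with quaternionic multiplication: moving $n$ past elements of $\mathbb{C}_{m}$ and tracking conjugates, using $m\cdot n=-n\cdot m$ and hence $c\cdot n=n\cdot\overline{c}$ and $\overline{n}\,c\,n=\overline{c}$ for $c\in\mathbb{C}_{m}$. None of this is deep, but it has to be carried out carefully; everything else is a direct application of the slice decomposition $H=H^{Jm}_{+}\oplus H^{Jm}_{-}$ and the Fourier expansion in a quaternionic orthonormal basis.
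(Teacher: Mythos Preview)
Your argument is correct and complete. The paper itself does not give a self-contained proof of this lemma; it simply remarks that the proof is similar to that of \cite[Lemma~2.4]{Feng}. Your approach --- using the orthogonal splitting $H=H^{Jm}_{+}\oplus H^{Jm}_{-}$ to obtain the Pythagorean identity $\|f_{k}\|^{2}=\|g_{k}\|^{2}+\|h_{k}\|^{2}$, and then handling completeness by restricting the quaternionic Fourier expansion to the slice --- is exactly the standard route and is presumably what the cited reference does. The only extra care you take beyond a bare citation is the explicit verification that $\Phi$ carries orthonormal bases of $H^{Jm}_{+}$ to orthonormal bases of $H^{Jm}_{-}$ via the identity $\overline{n}\,c\,n=\overline{c}$ for $c\in\mathbb{C}_{m}$; this is correct and makes the second ``if'' direction transparent. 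Nothing is missing.
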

\begin{proof}
	Proof is similar to that of  \cite[Lemma 2.4]{Feng}.
\end{proof}
\begin{lemma}
Let $\{f_{k}: k = 1,2,3, \cdots \}$ be an orthonormal basis in $H$ and $A \in \mathcal{B}(H).$ Then the matrix of $A$ with respect to $\{f_{k}: k = 1,2,3, \cdots \}$ is given by $A = \big( q_{rs}\big)$, where $q_{rs} = \left\langle f_{r}| Af_{s}\right\rangle_{\mathbb{H}}$.
\end{lemma}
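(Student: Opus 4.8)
The plan is to show that the correspondence $A \mapsto (q_{rs})$ with $q_{rs} = \left\langle f_r \mid A f_s\right\rangle_{\mathbb{H}}$ is nothing but the realization of $A$ acting on the coordinates of vectors with respect to the orthonormal basis $\{f_k\}$. First I would recall the basic Fourier theory for the separable right quaternionic Hilbert space $H$ (as developed in \cite{Ghiloni}): every $x \in H$ admits the norm-convergent expansion $x = \sum_{k=1}^{\infty} f_k \cdot \left\langle f_k \mid x\right\rangle_{\mathbb{H}}$ together with the Parseval identity $\|x\|^{2} = \sum_{k=1}^{\infty} |\left\langle f_k \mid x\right\rangle_{\mathbb{H}}|^{2}$. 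Writing $c_k := \left\langle f_k \mid x\right\rangle_{\mathbb{H}}$, the partial sums $s_N := \sum_{k=1}^{N} f_k \cdot c_k$ converge to $x$ in $H$; in particular, taking $x = f_s$ shows the coordinate vector of $f_s$ is the $s$-th standard unit vector of $\ell^{2}(\mathbb{N},\mathbb{H})$.

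Next, using that $A \in \mathcal{B}(H)$ is continuous and right $\mathbb{H}$-linear, I get $Ax = \lim_{N\to\infty} A s_N = \lim_{N\to\infty} \sum_{k=1}^{N} (A f_k)\cdot c_k = \sum_{k=1}^{\infty} (A f_k)\cdot c_k$ in $H$. Applying the bounded functional $\left\langle f_r \mid \cdot\right\rangle_{\mathbb{H}}$, which is continuous and right $\mathbb{H}$-linear by axiom (2) of the inner product, and interchanging it with the limit, I obtain
\[
\left\langle f_r \mid A x\right\rangle_{\mathbb{H}} = \sum_{k=1}^{\infty} \left\langle f_r \mid A f_k\right\rangle_{\mathbb{H}}\, c_k = \sum_{k=1}^{\infty} q_{rk}\, c_k .
\]
This says precisely that if $x$ has coordinate vector $(c_k)_k \in \ell^{2}(\mathbb{N},\mathbb{H})$, then $Ax$ has coordinate vector $\big(\sum_{k} q_{rk} c_k\big)_r$; that is, $A$ acts on coordinates by left multiplication by the array $(q_{rs})$, which is the defining property of ``the matrix of $A$ with respect to $\{f_k\}$''. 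Equivalently, $A f_s = \sum_{r} f_r \cdot q_{rs}$ for every $s$.

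To conclude, I would observe that the coordinate map $x \mapsto (\left\langle f_k \mid x\right\rangle_{\mathbb{H}})_k$ is an isometric isomorphism of $H$ onto $\ell^{2}(\mathbb{N},\mathbb{H})$, so an operator is completely determined by its coordinate action; hence $A$ is uniquely recovered from $(q_{rs})$, and conversely $(q_{rs})$ is determined by $A$. The only point requiring genuine care — and the one I expect to be the main (minor) obstacle — is keeping track of the side on which quaternionic scalars act, since $\mathbb{H}$ is noncommutative: the expansion coefficients must sit on the \emph{right} of the basis vectors, the functionals $\left\langle f_r \mid \cdot\right\rangle_{\mathbb{H}}$ are right $\mathbb{H}$-linear, and in a composition the matrices multiply in the order $q^{AB}_{rs} = \sum_{t} q^{A}_{rt}\, q^{B}_{ts}$ with the factors in exactly that order. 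Once these conventions are fixed consistently, all the convergence and interchange-of-limit steps are justified by Bessel's inequality and the continuity of $A$ and of the inner product, exactly as in the complex case.
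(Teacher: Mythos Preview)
Your argument is correct: expanding $x$ in the basis, using continuity and right $\mathbb{H}$-linearity of $A$ and of $\langle f_r\mid\cdot\rangle_{\mathbb{H}}$ to obtain $\langle f_r\mid Ax\rangle_{\mathbb{H}}=\sum_k q_{rk}\langle f_k\mid x\rangle_{\mathbb{H}}$, and noting that the coordinate map is an isometric isomorphism, is exactly the standard route. The paper does not spell out a proof at all but simply refers to \cite[Lemma~3.1]{Feng}, whose argument is precisely this Fourier-expansion computation in the $\ell^{2}$ setting; your write-up is a faithful (and slightly more careful, regarding the side of scalar action) version of that.
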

\begin{proof}
	Proof follows in similar lines of  \cite[Lemma 3.1]{Feng}
\end{proof}
Let $\{e_{k}: k = 1, 2, 3, \cdots \}$ be an orthonormal basis in $H^{Jm}_{+}$. Then for every $x \in H^{Jm}_{+}$, we have
$ x = \sum\limits_{k=1}^{\infty} e_{k} \left\langle e_{k}| x \right\rangle$. Then we define the conjugate of $x$ as
\begin{equation*}
\overline{x} = \sum\limits_{k=1}^{\infty} e_{k} \overline{\left\langle e_{k}| x\right\rangle}.
\end{equation*}

If $A \in \mathcal{B}(H^{Jm}_{+})$,  then  $A(x) = \sum\limits_{k=1}^{\infty} e_{k} \left\langle e_{k}|Ax \right\rangle$. The conjugate of $A$ is defined by
\begin{equation*}
\overline{A}(x) = \sum\limits_{k=1}^{\infty} e_{k} \overline{\left\langle e_{k}|Ax \right\rangle}.
\end{equation*}

Moreover if $(q_{rs})$ is the matrix of $A$ with respect $\{e_{k}: k = 1,2,3, \cdots\}$, then   $(\overline{q}_{rs})$ and $(q_{rs} \cdot n)$ represents the matrices of $\overline{A}$ and $ A \cdot n$ respectively.

If $x \in H$ then $x = x_{1}+x_{2} \cdot n$, where $x_{1},x_{2} \in H^{Jm}_{+}$.
\begin{thm}\label{associate}
Let $A \in \mathcal{B}(H)$ has the matrix representation, $( q_{rs})  = (\alpha_{rs}) + (\beta_{rs}) \cdot n$ with an orthonormal basis $\{f_{k}: k = 1,2,3, \cdots \}$.  Then there exists a unique pair of operators $A_{1}, A_{2} \in \mathcal{B}(H^{Jm}_{+})$ such that
\begin{equation*}
A(x_{1}+x_{2}\cdot n) = (A_{1}x_{1}-A_{2}\overline{x}_{2}) +(A_{1}x_{2}+A_{2}\overline{x}_{2}), \; \text{for all}\; x_{1}
+x_{2} \cdot n \in H.
\end{equation*} Furthermore, $A_{1}, A_{2}$ admits matrix representations $(\alpha_{rs}), (\beta_{rs})$ with orthonoraml basis $\{f_{k}: k = 1,2,3, \cdots \}$,  respectively.
\end{thm}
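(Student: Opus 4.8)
The plan is to decompose the action of $A$ through the orthogonal splitting $H=H^{Jm}_{+}\oplus H^{Jm}_{-}$ of Lemma \ref{directsum} together with the anti-linear isomorphism $\Phi(x)=x\cdot n$ of Proposition \ref{antilineariso}, keeping careful track of the identity $n\cdot\lambda=\overline{\lambda}\cdot n$ valid for $\lambda\in\mathbb{C}_{m}$; it is precisely this twist that will force the conjugates $\overline{x}_{j}$ to appear. I would first record that $H^{Jm}_{+}$ (and likewise $H^{Jm}_{-}$) is a right $\mathbb{C}_{m}$-submodule of $H$: for $x\in H^{Jm}_{+}$ and $\lambda\in\mathbb{C}_{m}$ one has $J(x\cdot\lambda)=(Jx)\cdot\lambda=(x\cdot m)\cdot\lambda=x\cdot(\lambda m)=(x\cdot\lambda)\cdot m$. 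Consequently the two projections attached to the decomposition of Lemma \ref{directsum} are right $\mathbb{C}_{m}$-linear, and by Lemma \ref{innerdirect} the decomposition is orthogonal, $\|x_{+}+x_{-}\|^{2}=\|x_{+}\|^{2}+\|x_{-}\|^{2}$.

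Next I would construct $A_{1}$ and $A_{2}$. For $x_{1}\in H^{Jm}_{+}$, since $\Phi(H^{Jm}_{+})=H^{Jm}_{-}$ we may write $Ax_{1}=u(x_{1})+v(x_{1})\cdot n$ uniquely with $u(x_{1}),v(x_{1})\in H^{Jm}_{+}$. Using that $A$ is right $\mathbb{H}$-linear, that the projections are right $\mathbb{C}_{m}$-linear, and the twist $n\cdot\lambda=\overline{\lambda}\cdot n$, one gets $u(x_{1}\cdot\lambda)=u(x_{1})\cdot\lambda$ and $v(x_{1}\cdot\lambda)=v(x_{1})\cdot\overline{\lambda}$, so $u$ is right $\mathbb{C}_{m}$-linear and $v$ is right $\mathbb{C}_{m}$-anti-linear. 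Setting $A_{1}:=u$ and $A_{2}y:=v(\overline{y})$ then yields two right $\mathbb{C}_{m}$-linear maps on the complex Hilbert space $H^{Jm}_{+}$ with $Ax_{1}=A_{1}x_{1}+(A_{2}\overline{x}_{1})\cdot n$. Boundedness is immediate from $\|Ax_{1}\|^{2}=\|A_{1}x_{1}\|^{2}+\|A_{2}\overline{x}_{1}\|^{2}$ (orthogonality and $|n|=1$) together with $\|\overline{x}_{1}\|=\|x_{1}\|$, which gives $\|A_{1}\|,\|A_{2}\|\le\|A\|$, hence $A_{1},A_{2}\in\mathcal{B}(H^{Jm}_{+})$.

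Then the stated formula drops out: for $x_{2}\in H^{Jm}_{+}$, right $\mathbb{H}$-linearity and $n^{2}=-1$ give
\[
A(x_{2}\cdot n)=(Ax_{2})\cdot n=\bigl(A_{1}x_{2}+(A_{2}\overline{x}_{2})\cdot n\bigr)\cdot n=(A_{1}x_{2})\cdot n-A_{2}\overline{x}_{2},
\]
so that $A(x_{1}+x_{2}\cdot n)=(A_{1}x_{1}-A_{2}\overline{x}_{2})+(A_{2}\overline{x}_{1}+A_{1}x_{2})\cdot n$. Uniqueness of the pair follows by setting $x_{2}=0$ in the identity and comparing the $H^{Jm}_{+}$- and $H^{Jm}_{-}$-parts (the sum in Lemma \ref{directsum} is direct): this forces $A_{1}$ and then, after applying $\Phi^{-1}$ and using that $x_{1}\mapsto\overline{x}_{1}$ is a bijection of $H^{Jm}_{+}$, also $A_{2}$. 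For the matrix claim I would take $\{f_{k}\}$ to be an orthonormal basis of $H^{Jm}_{+}$ — which, by the computation just before the theorem, is simultaneously an orthonormal basis of $H$ — and use it to form conjugates, so $\overline{f}_{s}=f_{s}$; then $q_{rs}=\langle f_{r}|Af_{s}\rangle_{\mathbb{H}}=\langle f_{r}|A_{1}f_{s}\rangle_{\mathbb{H}}+\langle f_{r}|A_{2}f_{s}\rangle_{\mathbb{H}}\cdot n$ with both inner products lying in $\mathbb{C}_{m}$, and uniqueness of the decomposition $\mathbb{H}=\mathbb{C}_{m}\oplus\mathbb{C}_{m}\cdot n$ identifies $(\alpha_{rs})$ with the matrix of $A_{1}$ and $(\beta_{rs})$ with that of $A_{2}$.

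The only delicate point — and it is bookkeeping rather than depth — will be the linearity analysis of $u$ and $v$ in the construction step: getting the conjugation to land on the correct factor requires being scrupulous about the relation $n\cdot\lambda=\overline{\lambda}\cdot n$ on $\mathbb{C}_{m}$ and about the fact that conjugation on $H^{Jm}_{+}$ is anti-linear and isometric. Once the correct (anti-)linearity of $u$ and $v$ is in hand, everything else reduces to routine manipulations with the orthogonal direct sum.
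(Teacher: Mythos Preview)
Your proof is correct and supplies exactly the argument that the paper omits: the paper's own proof consists of the single sentence ``Using Lemma \ref{innerdirect}, the proof follows in the similar lines of \cite[Theorem 3.3]{Feng},'' so your construction via the splitting $H=H^{Jm}_{+}\oplus H^{Jm}_{-}$, the anti-linear isomorphism $\Phi$, and the twist $n\cdot\lambda=\overline{\lambda}\cdot n$ is precisely the intended route, carried out in full rather than by citation. You have in fact also silently repaired two typos in the displayed formula of the statement (the missing factor $\cdot\, n$ on the second summand and the index $\overline{x}_{2}$ that should be $\overline{x}_{1}$), which is confirmed by the way the identity is used later in the proof of Lemma \ref{comparision}.
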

\begin{proof}
	Using Lemma \ref{innerdirect}, the proof follows in the similar lines of \cite[Theorem 3.3]{Feng}.
\end{proof}
 Let $A \in \mathcal{B}(H).$ Then the adjoint of $A$ is denoted by $A^{*}$ and it is given by
\begin{equation*}
A^{*}(y_{1}+y_{2}\cdot n) = (A_{1}^{*}y_{1}+\overline{A}_{2}^{*}\overline{y}_{2}) + (A_{1}^{*}y_{2}-\overline{A}_{2}^{*}\overline{y_{1}})\cdot n, \; \text{for all} \;y_{1}+y_{2}\cdot n \in H.
\end{equation*}
\begin{prop}
Let $A = A_{1}+A_{2} \cdot n \in \mathcal{B}(H)$. Define  $\chi_{A} \colon H^{Jm}_{+} \oplus H^{Jm}_{+} \to H^{Jm}_{+} \oplus H^{Jm}_{+}$ by
\begin{align*}
\chi_{A}\begin{pmatrix} x_{1} \\ x_{2} \end{pmatrix} &= \begin{pmatrix} A_{1}x_{1}+ A_{2}x_{2} \\ -\overline{A}_{2}x_{2}+ \overline{A}_{1}x_{2} \end{pmatrix} \\
&= \begin{pmatrix} A_{1} & {A}_{2} \\
-\overline{A_{2}} & \overline{A_{1}}\end{pmatrix}\begin{pmatrix} x_{1} \\ x_{2} \end{pmatrix} ,\; \text{for all} \; \begin{pmatrix} x_{1} \\ x_{2} \end{pmatrix} \in H^{Jm}_{+} \oplus H^{Jm}_{+}.
\end{align*}
Then  $\chi_{A} \in \mathcal B \left(H^{Jm}_{+}\oplus H^{Jm}_{+}\right)$ and   $\|\chi_{A} \| = \|A\|$.
\end{prop}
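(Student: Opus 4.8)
The plan is to exhibit $\chi_A$ as the transport of $A$ under a fixed norm-preserving identification of $H$ with $H^{Jm}_{+}\oplus H^{Jm}_{+}$, after which both assertions become formal. First I would record the identification. By Lemma \ref{directsum} there is the orthogonal splitting $H = H^{Jm}_{+}\oplus H^{Jm}_{-}$, and by Proposition \ref{antilineariso} the map $x_2\mapsto x_2\cdot n$ carries $H^{Jm}_{+}$ bijectively onto $H^{Jm}_{-}$; since $|n|=1$ this map preserves norms. Hence every $x\in H$ has a unique expression $x = x_1 + x_2\cdot n$ with $x_1,x_2\in H^{Jm}_{+}$, and, using Lemma \ref{innerdirect} to cancel the cross term, $\|x\|^2 = \|x_1\|^2 + \|x_2\cdot n\|^2 = \|x_1\|^2 + \|x_2\|^2$. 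Thus $\mathcal{U}\colon H^{Jm}_{+}\oplus H^{Jm}_{+}\to H$, $\mathcal{U}(x_1,x_2) = x_1 + x_2\cdot n$, is a surjective isometry of normed spaces.

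Next I would check that $A\mathcal{U} = \mathcal{U}\chi_A$, i.e.\ that $\chi_A$ is precisely what $A$ looks like in the coordinates supplied by $\mathcal{U}$. This is a direct computation: applying the decomposition of Theorem \ref{associate} to $A(x_1 + x_2\cdot n)$ produces the two $H^{Jm}_{+}$-components of $Ax$, and comparing them with the definition of $\chi_A$ shows that, row by row, they coincide with the action of $\begin{pmatrix} A_1 & A_2 \\ -\overline{A_2} & \overline{A_1}\end{pmatrix}$ on $(x_1,x_2)$, up to the placement of the bar operation on the arguments, which must be matched to the conventions of Theorem \ref{associate}. Since $A_1,A_2\in\mathcal{B}(H^{Jm}_{+})$ and the bar operation is an isometric involution preserving $H^{Jm}_{+}$, the block form shows that $\chi_A$ is $\mathbb{C}_m$-linear and bounded, so $\chi_A\in\mathcal{B}(H^{Jm}_{+}\oplus H^{Jm}_{+})$.

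Finally, because $\mathcal{U}$ is a surjective isometry, the relation $\chi_A = \mathcal{U}^{-1}A\mathcal{U}$ (as maps) gives $\|\chi_A v\| = \|A\mathcal{U}v\| \le \|A\|\,\|\mathcal{U}v\| = \|A\|\,\|v\|$ for every $v\in H^{Jm}_{+}\oplus H^{Jm}_{+}$, hence $\|\chi_A\|\le\|A\|$; applying the same reasoning to $A = \mathcal{U}\chi_A\mathcal{U}^{-1}$ yields the reverse inequality, so $\|\chi_A\| = \|A\|$. Equivalently, without naming $\mathcal{U}$, one lets $x = x_1 + x_2\cdot n$ range over the unit sphere of $H$, observes that then $(x_1,x_2)$ (after the appropriate conjugation) ranges over the unit sphere of $H^{Jm}_{+}\oplus H^{Jm}_{+}$, and passes to the supremum in $\|Ax\|^2 = \|\chi_A(x_1,x_2)\|^2$. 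I expect the only genuine difficulty to be bookkeeping: keeping the signs and the insertions of the bar operation consistent across the definition of $\chi_A$, the formula of Theorem \ref{associate}, and the map $\mathcal{U}$, so that $A\mathcal{U}=\mathcal{U}\chi_A$ holds exactly and $\chi_A$ is $\mathbb{C}_m$-linear; once that is pinned down the proposition is immediate, and this is the argument of \cite{Feng} carried over to the slice Hilbert space $H^{Jm}_{+}$.
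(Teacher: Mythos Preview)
Your strategy---transport $A$ through a norm-preserving identification of $H$ with $H^{Jm}_{+}\oplus H^{Jm}_{+}$---is exactly what the paper does, and once the identification is correct the argument is complete. The problem is that the particular map you wrote, $\mathcal{U}(x_1,x_2)=x_1+x_2\cdot n$, does \emph{not} satisfy $A\mathcal{U}=\mathcal{U}\chi_A$. Using Theorem~\ref{associate} one has
\[
A(x_1+x_2\cdot n)=(A_1x_1-A_2\overline{x_2})+(A_1x_2+A_2\overline{x_1})\cdot n,
\]
whereas
\[
\mathcal{U}\chi_A(x_1,x_2)=(A_1x_1+A_2x_2)+(-\overline{A}_2x_1+\overline{A}_1x_2)\cdot n,
\]
and these differ. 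Moreover your $\mathcal{U}$ is not $\mathbb{C}_m$-linear: for $\lambda\in\mathbb{C}_m$ one has $(x_2\lambda)\cdot n=x_2\cdot n\cdot\overline{\lambda}\neq (x_2\cdot n)\cdot\lambda$, so even if the intertwining held, linearity of $\chi_A$ would not follow from it.

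The fix, which the paper's computation makes explicit, is to use instead
\[
\mathcal{V}(x_1,x_2)=x_1-\overline{x_2}\cdot n.
\]
This map \emph{is} $\mathbb{C}_m$-linear (because $\overline{x_2\lambda}\cdot n=\overline{x_2}\,\overline{\lambda}\cdot n=\overline{x_2}\cdot n\cdot\lambda$), is a surjective isometry, and a direct check gives $A\mathcal{V}=\mathcal{V}\chi_A$. The paper carries this out by computing $\|\chi_A(x_1,x_2)\|^2=\|A(x_1-\overline{x_2}\cdot n)\|^2$ and then observing that $(x_1,x_2)\mapsto x_1-\overline{x_2}\cdot n$ is a bijection of unit spheres. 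So what you labelled ``bookkeeping'' is in fact the entire content of the verification; once you replace $\mathcal{U}$ by $\mathcal{V}$ your outline becomes the paper's proof.
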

\begin{proof}
Let $\begin{pmatrix} x_{1} \\ x_{2} \end{pmatrix}, \begin{pmatrix} y_{1} \\ y_{2} \end{pmatrix} \in H^{Jm}_{+}\oplus H^{Jm}_{+}$ and $\lambda \in \mathbb{C}_{m}.$ Then
\begin{align*}
\chi_{A} \left(\begin{pmatrix} x_{1} \\ x_{2} \end{pmatrix} + \lambda \begin{pmatrix} y_{1} \\ y_{2} \end{pmatrix}\right) &= \chi_{A}\left(\begin{pmatrix} x_{1}+\lambda y_{1} \\ x_{2}+\lambda y_{2} \end{pmatrix}\right) \\
                       &=\begin{pmatrix} A_{1}(x_{1} + \lambda y_{1}) + A_{2}(x_{2}+\lambda y_{2}) \\ -\overline{A}_{2}(x_{1}+\lambda y_{1}) + \overline{A}_{1}(x_{2}+\lambda y_{2}) \end{pmatrix} \\
											 &= \begin{pmatrix} A_{1}x_{1} +  A_{2}x_{2} \\ -\overline{A}_{2}x_{1}+ \overline{A}_{1}x_{2} \end{pmatrix} + \lambda \begin{pmatrix} A_{1} y_{1} + A_{2} y_{2} \\ -\overline{A}_{2}y_{1} + \overline{A}_{1}y_{2} \end{pmatrix} \\
											 & = \chi_{A} \begin{pmatrix} x_{1} \\ x_{2} \end{pmatrix} + \lambda \ \cdot \chi_{A} \begin{pmatrix} y_{1} \\ y_{2} \end{pmatrix}.
\end{align*}

This ensures that $\chi_{A}$ is $\mathbb{C}_{m}$ - linear operator on $H^{Jm}_{+} \oplus H^{Jm}_{+}.$ It is remains to show that $\chi_{A}$ is bounded and compute its norm. We have
\begin{align*}
\left\|\chi_{A}\begin{pmatrix} x_{1} \\ x_{2} \end{pmatrix}\right\|^{2} &= \left\|\begin{pmatrix} A_{1}x_{1} +  + A_{2}x_{2} \\ -\overline{A}_{2}x_{1}+ \overline{A}_{1}x_{2} \end{pmatrix}\right\|^{2} \\
&= \|A_{1}x_{1} + A_{2}x_{2}\|^{2} + \|-\overline{A}_{2}x_{1}+ \overline{A}_{1}x_{2}\|^{2} \\
&= \|A_{1}x_{1} + A_{2}x_{2}\|^{2} + \|\overline{-\overline{A}_{2}x_{1}+ \overline{A}_{1}x_{2}}\|^{2} \\
&= \|A_{1}x_{1} + A_{2}x_{2}\|^{2} + \|{A}_{2}\overline{x_{1}} - A_{1}\overline{x}_{2} \|^{2} \\
&= \|(A_{1}x_{1} + A_{2}x_{2}) + ({A}_{2}\overline{x_{1}} - A_{1}\overline{x}_{2}) \cdot n\|^{2}\\
&= \|(A_{1}+A_{2}\cdot n)(x_{1}-\overline{x}_{2} \cdot n)\| \\
&= \|A((x_{1}-\overline{x}_{2} \cdot n))\|^{2} \\
&\leq \|A\|^{2} (\|x_{1}\|^{2} + \|x_{2}\|^{2})\\
&= \|A\|^{2} \left\|\begin{pmatrix} x_{1} \\ x_{2} \end{pmatrix}\right\|^{2}.
\end{align*}
Therefore $\|\chi_{A}\| \leq \|A\|$.

If $\begin{pmatrix} x_{1} \\ x_{2} \end{pmatrix} \in H^{Jm}_{+} \oplus H^{Jm}_{+}$ with $\left\|\begin{pmatrix} x_{1} \\ x_{2} \end{pmatrix} \right\|^{2} = \|x_{1}\|^{2} + \|x_{2}\|^{2} = 1,$ then $x_{1} - \overline{x}_{2}\cdot n \in H$ with $\|x_{1}-\overline{x}_{2}\cdot n\|^{2}= 1$ and $\left\|\chi_{A}\begin{pmatrix} x_{1} \\ x_{2} \end{pmatrix}\right\| = \|A((x_{1}-\overline{x}_{2} \cdot n))\|.$ Therefore
\begin{equation*}
\sup_{x\in H, \|x\|=1} \|Ax\| = \sup_{y \in H^{Jm}_{+}\oplus H^{Jm}_{+}, \|y\|=1} \|\chi_{A}y\|.
\end{equation*}
Thus $\|A\| = \|\chi_{A}\|.$
\end{proof}

\begin{prop}\label{injective}
The map $\xi \colon \mathcal{B}(H) \to \mathcal{B}(H^{Jm}_{+} \oplus H^{Jm}_{+})$ defined by
\begin{equation*}
\xi(A) = \chi_{A} \colon= \begin{pmatrix} A_{1} & A_{2} \\ -\overline{A}_{2} & \overline{A}_{1}\end{pmatrix}, \; \text{for all}\; A \in \mathcal{B}(H),
\end{equation*}
is an injective $\mathbb{C}_{m}$- algebra homomorphism. Here $A_{1}, A_{2} \in \mathcal{B}(H^{Jm}_{+})$ are associated to $A$ as in Theorem \ref{associate}.
\end{prop}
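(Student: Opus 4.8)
The plan is to verify, in order, that $\xi$ is well-defined, $\mathbb{C}_m$-linear, multiplicative, unital, $*$-preserving (this last is optional for the stated claim but cheap), and finally injective; the bulk of the work is bookkeeping with the $2\times 2$ matrix picture set up in Theorem \ref{associate} and the formula for $A^*$ stated just before. First I would note that $\chi_A$ is a bounded $\mathbb{C}_m$-linear operator on $H^{Jm}_{+}\oplus H^{Jm}_{+}$ with $\|\chi_A\|=\|A\|$ — this is exactly the preceding proposition — so $\xi$ maps into $\mathcal{B}(H^{Jm}_{+}\oplus H^{Jm}_{+})$ and is well-defined. Linearity: given $A=A_1+A_2\cdot n$ and $B=B_1+B_2\cdot n$ in $\mathcal{B}(H)$ and $\lambda\in\mathbb{C}_m$, the uniqueness clause of Theorem \ref{associate} forces $(A+B\lambda)_1=A_1+B_1\lambda$ and $(A+B\lambda)_2=A_2+B_2\lambda$ (one must be a little careful that $\lambda\in\mathbb{C}_m$ commutes past the decomposition the right way, using $m\cdot n=-n\cdot m$), whence $\chi_{A+B\lambda}=\chi_A+\lambda\,\chi_B$ entrywise.

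For multiplicativity I would compute $\chi_A\chi_B$ as a product of $2\times2$ matrices and compare it with $\chi_{AB}$. The key algebraic facts are that conjugation $A\mapsto\overline{A}$ on $\mathcal{B}(H^{Jm}_{+})$ is a (complex anti-linear) ring homomorphism, so $\overline{A_iB_j}=\overline{A_i}\,\overline{B_j}$, and that the decomposition $(AB)_1, (AB)_2$ of the product is given by the ``complex part / $n$-part'' of $(A_1+A_2\cdot n)(B_1+B_2\cdot n)$; expanding this using $n\cdot B_j = \overline{B_j}\cdot n$ (Proposition \ref{antilineariso}, i.e. $x\mapsto x\cdot n$ intertwines $\overline{(\cdot)}$) and $n\cdot n=-1$ gives $(AB)_1=A_1B_1-A_2\overline{B_2}$ and $(AB)_2=A_1B_2+A_2\overline{B_1}$. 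Plugging these into the matrix product shows the $(1,1)$ and $(1,2)$ entries match immediately, and the $(2,1)$, $(2,2)$ entries match after one more application of $\overline{A_iB_j}=\overline{A_i}\,\overline{B_j}$. The identity $I=I+0\cdot n$ clearly gives $\chi_I=\begin{pmatrix}I&0\\0&I\end{pmatrix}$, so $\xi$ is unital. If desired, the same bookkeeping against the displayed formula for $A^*$ shows $\chi_{A^*}=\chi_A^{\,*}$.

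Injectivity is then immediate: $\xi(A)=0$ forces $A_1=0$ and $A_2=0$, and by Theorem \ref{associate} the action $A(x_1+x_2\cdot n)=(A_1x_1-A_2\overline{x}_2)+(A_1x_2+A_2\overline{x}_2)\cdot n$ then vanishes on all of $H$, so $A=0$; alternatively one may simply quote $\|\xi(A)\|=\|A\|$ from the previous proposition, which gives injectivity for free. The only genuinely delicate point is keeping the anti-linearity of conjugation and the anticommutation $m\cdot n=-n\cdot m$ straight when moving scalars and the symbol $n$ across operators; everything else is a direct comparison of matrix entries, and I would organize the write-up so that the product identities $(AB)_1=A_1B_1-A_2\overline{B_2}$, $(AB)_2=A_1B_2+A_2\overline{B_1}$ are isolated as a preliminary computation and then used twice.
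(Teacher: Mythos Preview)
Your proposal is correct and follows essentially the same route as the paper: verify $\mathbb{C}_m$-linearity by computing $\chi_{A+\lambda B}$ entrywise, verify multiplicativity via the product identities $(AB)_1=A_1B_1-A_2\overline{B_2}$ and $(AB)_2=A_1B_2+A_2\overline{B_1}$ together with a direct $2\times 2$ matrix multiplication, and deduce injectivity from $A_1=A_2=0\Rightarrow A=0$. The only differences are cosmetic: you make the product formulas explicit as a preliminary step and add the unital, $*$-preserving, and isometric-injectivity remarks, none of which the paper includes (the $*$-identity is in fact proved separately as the next theorem).
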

\begin{proof} Let $A , B \in \mathcal{B}(H)$ and $\lambda \in \mathbb{C}_{m}.$ Then
\begin{align*}
\xi(A+\lambda B) &= \chi_{(A+ \lambda B)} \\
         &= \begin{pmatrix}A_{1}+\lambda B_{1} & A_{2}+\lambda B_{2} \\ -\overline{A}_{2}-\overline{\lambda} \overline{B}_{2} & \overline{A}_{1}+\overline{\lambda} \overline{B}_{1}\end{pmatrix}\\
				 &= \begin{pmatrix} A_{1} & A_{2} \\ -\overline{A}_{2} & \overline{A}_{1}\end{pmatrix} + \lambda \begin{pmatrix} B_{1} & B_{2} \\ -\overline{B}_{2} & \overline{B}_{1}\end{pmatrix}\\
				& = \chi_{A} + \lambda \cdot \chi_{B} \\
				& = \xi(A) + \lambda \cdot \xi(B).
\end{align*}
Next,
\begin{align*}
\xi(A\cdot B) &= \chi_{A\cdot B} \\
              &=  \begin{pmatrix}A_{1}B_{1} - A_{2} \overline{B}_{2} &  A_{1}B_{2}+A_{2}\overline{B}_{1} \\
							    -\overline{A}_{1}\overline{B}_{2}-\overline{A}_{2} {B}_{1} & \overline{A}_{1}\overline{B}_{1} - \overline{A}_{2} {B}_{2}\end{pmatrix}\\
							&= \begin{pmatrix} A_{1} & A_{2} \\ -\overline{A}_{2} & \overline{A}_{1}\end{pmatrix} \cdot \begin{pmatrix} B_{1} & B_{2} \\ -\overline{B}_{2} & \overline{B}_{1}\end{pmatrix} 	\\
							&= \chi_{A} \cdot \chi_{B} \\
							&= \xi(A) \cdot \xi(B).
\end{align*}
If $\xi(A) = 0,$ then $\chi_{A} = \begin{pmatrix} A_{1} & A_{2} \\ -\overline{A}_{2} & \overline{A}_{1}\end{pmatrix} = \begin{pmatrix} 0 & 0 \\ 0 & 0\end{pmatrix}.$  Therefore $A = 0.$ Also using the fact that $\xi (0) = 0,$ we can conclude that $\xi$ is injective complex algebra homomorphism.\qedhere
\end{proof}

\begin{thm} \label{equivalents}
 Let $A \in \mathcal{B}(H)$. Then
\end{thm}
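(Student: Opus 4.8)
The plan is to reduce every clause to a single structural fact: the homomorphism $\xi$ of Proposition \ref{injective} is in fact an \emph{isometric $*$-homomorphism}, so that $\mathcal{B}(H)$ is identified, as a normed $*$-algebra, with a norm-closed $*$-subalgebra of $\mathcal{B}(H^{Jm}_{+}\oplus H^{Jm}_{+})$; granted this, each equivalence is immediate because an injective $*$-homomorphism preserves and reflects self-adjointness, normality, idempotency, unitarity, being a projection, and being a partial isometry, and -- through spectra or numerical ranges -- positivity.

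The one computation to carry out is $\xi(A^{*}) = \xi(A)^{*}$ for every $A \in \mathcal{B}(H)$. Write $A = A_{1}+A_{2}\cdot n$ with $A_{1},A_{2}\in\mathcal{B}(H^{Jm}_{+})$ as in Theorem \ref{associate}. From the matrix descriptions $(\overline{q_{rs}})$ of $\overline{B}$ and $(\overline{q_{sr}})$ of $B^{*}$ for $B\in\mathcal{B}(H^{Jm}_{+})$, the conjugation $B\mapsto\overline{B}$ on $\mathcal{B}(H^{Jm}_{+})$ is involutive and commutes with taking adjoints, i.e.\ $\overline{B^{*}} = \overline{B}^{\,*}$. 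Reading off the components of $A^{*}$ from the adjoint formula displayed before Proposition \ref{injective}, the pair associated to $A^{*}$ by Theorem \ref{associate} is $\bigl(A_{1}^{*},\,-\overline{A_{2}}^{\,*}\bigr)$, so that
\[
\chi_{A^{*}} = \begin{pmatrix} A_{1}^{*} & -\overline{A_{2}}^{\,*} \\ A_{2}^{*} & \overline{A_{1}}^{\,*} \end{pmatrix}.
\]
On the other hand, the Hilbert-space adjoint of the block operator $\chi_{A} = \begin{pmatrix} A_{1} & A_{2} \\ -\overline{A_{2}} & \overline{A_{1}} \end{pmatrix}$ on $H^{Jm}_{+}\oplus H^{Jm}_{+}$ is
\[
\chi_{A}^{*} = \begin{pmatrix} A_{1}^{*} & (-\overline{A_{2}})^{*} \\ A_{2}^{*} & (\overline{A_{1}})^{*} \end{pmatrix} = \begin{pmatrix} A_{1}^{*} & -\overline{A_{2}}^{\,*} \\ A_{2}^{*} & \overline{A_{1}}^{\,*} \end{pmatrix} = \chi_{A^{*}}.
\]
Combined with Proposition \ref{injective} and the norm identity $\|\chi_{A}\| = \|A\|$ established above, $\xi$ is an isometric, unital, injective $*$-homomorphism; hence its range is norm-closed, thus a $C^{*}$-subalgebra, and $\xi$ is a $*$-isomorphism onto it.

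Each equivalence now follows formally. Concretely: $A$ self-adjoint $\iff A=A^{*} \iff \chi_{A}=\chi_{A}^{*}$; $A$ normal $\iff AA^{*}=A^{*}A \iff \chi_{A}\chi_{A}^{*}=\chi_{A}^{*}\chi_{A}$; $A$ an orthogonal projection $\iff A=A^{*}=A^{2} \iff \chi_{A}=\chi_{A}^{*}=\chi_{A}^{2}$; $A$ unitary $\iff A^{*}A=AA^{*}=I \iff \chi_{A}^{*}\chi_{A}=\chi_{A}\chi_{A}^{*}=I$ (using $\xi(I)=I$); and $A$ a partial isometry $\iff A^{*}A$ is an orthogonal projection $\iff \xi(A^{*}A)=\chi_{A}^{*}\chi_{A}$ is an orthogonal projection $\iff \chi_{A}$ a partial isometry. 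For positivity one may argue through the spectral characterization available once $\xi$ is an isometric $*$-isomorphism onto a $C^{*}$-subalgebra, or directly: Lemma \ref{innerdirect} together with the identity $\|\chi_{A}(x_{1},x_{2})\| = \|A(x_{1}-\overline{x_{2}}\cdot n)\|$ established above shows that $\langle x\,|\,Ax\rangle_{\mathbb{H}}\ge 0$ for all $x\in H$ if and only if $\langle \eta\,|\,\chi_{A}\eta\rangle\ge 0$ for all $\eta\in H^{Jm}_{+}\oplus H^{Jm}_{+}$.

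The only place real care is needed is the adjoint identity $\xi(A^{*})=\xi(A)^{*}$, since it requires tracking how $B\mapsto\overline{B}$ on $\mathcal{B}(H^{Jm}_{+})$ interacts with both the adjoint and the splitting $x=x_{1}+x_{2}\cdot n$; the antilinear isomorphism $\Phi$ of Proposition \ref{antilineariso} and Lemma \ref{innerdirect} are exactly the tools that keep this bookkeeping consistent. After that, every clause of the theorem is a one-line consequence of $\xi$ being an isometric $*$-isomorphism onto a $C^{*}$-subalgebra.
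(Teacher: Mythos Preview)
Your proof is correct and follows essentially the same approach as the paper: both establish the key identity $\chi_{A^{*}}=\chi_{A}^{*}$ (the paper via matrix coefficients, you via the block formula for $A^{*}$ together with $\overline{B^{*}}=\overline{B}^{\,*}$) and then read off the remaining equivalences from the injectivity and multiplicativity of $\xi$. One small slip: in your ``direct'' argument for positivity you invoke the norm identity $\|\chi_{A}(x_{1},x_{2})\|=\|A(x_{1}-\overline{x_{2}}\cdot n)\|$, but what is actually needed is the corresponding inner-product identity $\langle x\mid Ax\rangle_{\mathbb{H}}=\bigl\langle (x_{1},-\overline{x_{2}})\mid \chi_{A}(x_{1},-\overline{x_{2}})\bigr\rangle$, which is what the paper computes; your $C^{*}$-subalgebra route does cover positivity correctly, however.
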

\begin{enumerate}
\item $\chi_{A^{*}} = \chi_{A}^{*}$
\item $A$ is self-adjoint $\Leftrightarrow \chi_{A}$ is self-adjoint
\item $A$ is positive $\Leftrightarrow \chi_{A}$ is positive
\item $A$ is normal $\Leftrightarrow \chi_{A}$ is normal
\item $A$ is unitary $\Leftrightarrow \chi_{A}$ is unitary.
\item $A$ is anti self-adjoint $\Leftrightarrow \chi_{A}$ is anti self-adjoint.
\end{enumerate}
\begin{proof}
Proof of $(1):$ Let $A = (a_{rs}) + (b_{rs}) \cdot n$ be the matrix representation of $A,$ where $A_{1}=  (a_{rs})$ and $A_{2}= (b_{rs}).$ Then
\begin{equation*}
A^{*}= (\overline{a_{sr}}) - (b_{sr}) \cdot n.
\end{equation*}
and
\begin{equation*}
\chi_{A^{*}}= \begin{pmatrix} (\overline{a}_{sr}) & - (b_{sr}) \\ (\overline{b}_{sr}) & (a_{sr})\end{pmatrix}.
\end{equation*}
We have
\begin{align*}
\big(\chi_{A}\big)^{*} & = \begin{pmatrix} (a_{rs}) & (b_{rs}) \\ -(\overline{b}_{rs}) & (\overline{a}_{rs})\end{pmatrix}^{*}\\
&= \begin{pmatrix} (a_{rs})^{*} &  -(\overline{b}_{rs})^{*}\\  (b_{rs})^{*} & (\overline{a}_{rs})^{*}\end{pmatrix}\\
&= \begin{pmatrix} (\overline{a}_{sr}) & - (b_{sr}) \\ (\overline{b}_{sr}) & (a_{sr})\end{pmatrix}\\
&= \chi_{A^{*}}.
\end{align*}
Proof of $(2):$ From Proposition \ref{injective} and $(1),$ we have

\begin{equation*}
A = A^{*} \Leftrightarrow \chi_{A} = \chi_{A^{*}}\Leftrightarrow \chi_{A} = \chi_{A}^{*}\Leftrightarrow \chi_{A}.
\end{equation*}
Proof of $(3):$ For $x_{1}, x_{2} \in H^{Jm}_{+}$, we have
\begin{align*}
A \; \text{is positive} &\Leftrightarrow A = A^{*}, \left\langle x_{1}+x_{2}\cdot n | A(x_{1}+x_{2}\cdot n)\right\rangle \geq 0  \\
& \Leftrightarrow \chi_{A} = \chi_{A}^{*}, \left\langle \begin{pmatrix} x_{1} \\ -\overline{x}_{2} \end{pmatrix}| \chi_{A}\begin{pmatrix} x_{1} \\ -\overline{x}_{2} \end{pmatrix} \right\rangle \geq 0.\\
&\Leftrightarrow \chi_{A}\; \text{is positive}.
\end{align*}
Proof of $(4):$
\begin{equation*}
 AA^{*}=A^{*}A \Leftrightarrow \chi_{AA^{*}} = \chi_{A^{*}A}
										\Leftrightarrow \chi_{A} \big(\chi_{A}\big)^{*} = \big(\chi_{A}\big)^{*}\chi_{A}.					
\end{equation*}
Proof of $(5):$
\begin{align*}
A^{*}A = AA^{*}=I_{H} &\Leftrightarrow \chi_{A^{*}A} = \chi_{AA^{*}} = \chi_{I_{H}}\\ &\Leftrightarrow\chi_{A}^{*} \chi_{A} = \chi_{A} \chi_{A}^{*} = I_{H^{Jm}_{+} \oplus H^{Jm}_{+}}.
\end{align*}
Proof of $(6):$
\begin{equation*}
A^{*}= - A \Leftrightarrow \chi_{{A}^{*}} = \chi_{-A} \Leftrightarrow \chi_{A}^{*} = - \chi_{A}.  \qedhere
\end{equation*}

\end{proof}
\begin{lemma}\label{comparision}
	Let $A \in \mathcal{B}(H)$. Then,
\end{lemma}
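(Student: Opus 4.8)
The plan is to transport each of the claimed correspondences between $A$ and $\chi_A$ across the map of Proposition~\ref{injective}, using that $A\mapsto\chi_A$ is an injective $\mathbb{C}_m$-algebra homomorphism which preserves adjoints (Theorem~\ref{equivalents}(1)), positivity (Theorem~\ref{equivalents}(3)) and the norm. The geometric backbone is the bijection
\[
\Psi\colon H^{Jm}_{+}\oplus H^{Jm}_{+}\longrightarrow H,\qquad \Psi\begin{pmatrix}x_1\\x_2\end{pmatrix}=x_1-\overline{x}_2\cdot n .
\]
By the norm computation preceding Proposition~\ref{injective}, $\Psi$ is a norm-preserving bijection, and a direct check (unwinding the definition of $\chi_A$ and the action of $A=A_1+A_2\cdot n$ from Theorem~\ref{associate}) shows the intertwining relation $\Psi(\chi_A y)=A\,\Psi(y)$ for all $y\in H^{Jm}_{+}\oplus H^{Jm}_{+}$. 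Since $\Psi$ carries the slice decomposition $H=H^{Jm}_{+}\oplus H^{Jm}_{-}$ onto the given splitting of $H^{Jm}_{+}\oplus H^{Jm}_{+}$ and preserves orthogonality (here Lemma~\ref{innerdirect} guarantees that the cross terms of the inner product vanish in real part, so $\Psi$ is isometric and maps closed subspaces to closed subspaces), it follows that $\Psi$ maps $N(\chi_A)$ onto $N(A)$ and $R(\chi_A)$ onto $R(A)$, and hence also $N(\chi_A)^{\bot}$ onto $N(A)^{\bot}$ and $R(\chi_A)^{\bot}$ onto $R(A)^{\bot}$. In particular $N(A)=\{0\}\iff N(\chi_A)=\{0\}$ and $R(A)^{\bot}=\{0\}\iff R(\chi_A)^{\bot}=\{0\}$, which is precisely what the uniqueness argument in the next theorem will need.

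For the statement comparing the moduli, I would invoke the uniqueness of the positive square root in $\mathcal{B}(H^{Jm}_{+}\oplus H^{Jm}_{+})$. By Theorem~\ref{equivalents}(3), $\chi_{|A|}$ is positive, and since $\chi$ is a homomorphism, using Theorem~\ref{equivalents}(1),
\[
\bigl(\chi_{|A|}\bigr)^{2}=\chi_{|A|^{2}}=\chi_{A^{*}A}=\chi_{A^{*}}\chi_{A}=\bigl(\chi_{A}\bigr)^{*}\chi_{A},
\]
so $\chi_{|A|}$ is \emph{the} positive square root of $(\chi_A)^{*}\chi_A$; hence $\chi_{|A|}=|\chi_A|$. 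For the partial-isometry assertion, recall that $A$ is a partial isometry if and only if $A^{*}A$ is an orthogonal projection; since $\chi$ preserves self-adjointness, sends idempotents to idempotents and is injective, $A^{*}A$ is a projection iff $(\chi_A)^{*}\chi_A=\chi_{A^{*}A}$ is a projection, i.e. iff $\chi_A$ is a partial isometry, and from the intertwining property of $\Psi$ the initial space $N(A)^{\bot}$ and final space $\overline{R(A)}$ of $A$ correspond under $\Psi$ to those of $\chi_A$.

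The step I expect to be the main nuisance is not conceptual but bookkeeping: the correspondence $\Psi$ is $\mathbb{C}_m$-linear in the first coordinate and conjugate-linear in the second, so one must verify carefully that ``null space'', ``range'', ``orthogonal complement'' and ``being a projection/partial isometry'' really do transfer without distortion. This is exactly where Lemma~\ref{innerdirect} and the orthonormal-basis/matrix description of Theorem~\ref{associate} are used: together they show that $\Psi$, though only real-linear, is nevertheless an isometry that respects orthogonality and maps closed subspaces onto closed subspaces of the same dimension. Once this is in place, every assertion of the lemma is a routine translation of a classical complex Hilbert space fact from $H^{Jm}_{+}\oplus H^{Jm}_{+}$ back to $H$.
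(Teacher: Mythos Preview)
Your approach is correct but more abstract than the paper's. The paper proves part~(1) by a bare-hands component computation: it expands $A(x_1+x_2\cdot n)=(A_1x_1-A_2\overline{x}_2)+(A_1x_2+A_2\overline{x}_1)\cdot n$ and matches the two components against those of $\chi_A\begin{pmatrix}x_1\\-\overline{x}_2\end{pmatrix}$, and likewise for the range. For part~(2) the paper does \emph{not} argue that $\Psi$ preserves orthogonality; instead it uses the algebraic identities $N(A)^{\bot}=\overline{R(A^{*})}$ and $R(A)^{\bot}=N(A^{*})$ together with $\chi_{A^{*}}=\chi_A^{*}$ (Theorem~\ref{equivalents}(1)), thereby reducing~(2) to an application of~(1) for the operator $A^{*}$. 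Your route packages the same computation into the single intertwining relation $\Psi\circ\chi_A=A\circ\Psi$ and then reads off kernels, ranges and complements from the fact that $\Psi$ is a norm-preserving real-linear bijection; this is valid --- because $N(A)$ is an $\mathbb{H}$-submodule and $N(\chi_A)$ a $\mathbb{C}_m$-subspace, their quaternionic/complex orthogonal complements coincide with the real ones, and a real-linear isometry transports those --- but the paper's detour through $A^{*}$ sidesteps that subtlety entirely and is shorter. Finally, your paragraphs on $\chi_{|A|}=|\chi_A|$ and the partial-isometry equivalence go beyond Lemma~\ref{comparision}; in the paper the partial-isometry statement is Lemma~\ref{iso}(2), proved there by a direct norm calculation rather than via the projection characterisation $A^{*}A=$ projection that you invoke, and the modulus identity is established inside the proof of Theorem~\ref{main}.
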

\begin{enumerate}
\item $ x_{1} + x_{2} \cdot n \in N(A) \Leftrightarrow \begin{pmatrix} x_{1} \\ -\overline{x}_{2} \end{pmatrix}\in {N}(\chi_{A})$ {and} $ x_{1} + x_{2} \cdot n \in R(A) \Leftrightarrow \begin{pmatrix} x_{1} \\ -\overline{x}_{2} \end{pmatrix}\in R(\chi_{A}).$
\item $ y_{1}+ y_{2} \cdot n \in N(A)^{\bot} \Leftrightarrow \begin{pmatrix} y_{1} \\ -\overline{y}_{2} \end{pmatrix} \in N(\chi_{A})^{\bot}$ and $y_{1}+ y_{2} \cdot n \in R(A)^{\bot} \Leftrightarrow \begin{pmatrix} y_{1} \\ -\overline{y}_{2} \end{pmatrix} \in R(\chi_{A})^{\bot}.$
\end{enumerate}
\begin{proof}
Proof of $(1):$
Let $x_{1} + x_{2} \cdot n \in H.$ Then
\begin{align*}
x_{1} + x_{2} \cdot n \in N(A) &\Leftrightarrow A(x_{1} + x_{2} \cdot n) = 0 \\
                                      &\Leftrightarrow (A_{1}x_{1} - A_{2}\overline{x}_{2}) + (A_{1}x_{2}+A_{2}\overline{x}_{1}) \cdot n = 0 \\
																			& \Leftrightarrow A_{1}x_{1} - A_{2}\overline{x}_{2} = 0 \; \text{and} \;  A_{1}x_{2}+A_{2}\overline{x}_{1} = 0 \\
																			& \Leftrightarrow A_{1}x_{1} - A_{2}\overline{x}_{2} = 0 \; \text{and} \; -\overline{A}_{1}\overline{x}_{1}-\overline{A}_{2}x_{1} = 0 \\
																			& \Leftrightarrow \chi_{A}\begin{pmatrix} x_{1} \\ -\overline{x}_{2}\end{pmatrix} = 0 \\
																			& \Leftrightarrow \begin{pmatrix} x_{1} \\ -\overline{x}_{2}\end{pmatrix} \in N(\chi_{A}).
\end{align*}
If $x_{1}+ x_{2} \cdot n \in {R}(A), $ then $x_{1}+x_{2} \cdot n = A(w_{1}+w_{2}\cdot n),$ for some $w_{1} + w_{2} \cdot n \in H.$ We have
\begin{align*}
     x_{1}+ x_{2} \cdot n \in R(A)  & \Leftrightarrow x_{1}+x_{2} \cdot n = A(w_{1}+w_{2}\cdot n)\\
                                            &\Leftrightarrow A_{1}w_{1}-A_{2}\overline{w}_{2} = x_{1} \; \text{and} \; \ A_{1}w_{2}+A_{2}\overline{w}_{1} = x_{2} \\
                                            &\Leftrightarrow A_{1}w_{1}-A_{2}\overline{w}_{2} = x_{1} \; \text{and} \; -\overline{A}_{1}\overline{w}_{2} - \overline{A}_{2}\overline{w}_{1} = -\overline{x}_{2} \\
																						&\Leftrightarrow \begin{pmatrix} A_{1} & A_{2} \\ -\overline{A}_{2} & \overline{A}_{1}\end{pmatrix}\begin{pmatrix} w_{1} \\ -\overline{w}_{2}\end{pmatrix} = \begin{pmatrix} x_{1} \\ -\overline{x}_{2}\end{pmatrix} \\
																						&\Leftrightarrow \chi_{A}\begin{pmatrix} w_{1} \\ -\overline{w}_{2}\end{pmatrix} = \begin{pmatrix} x_{1} \\ -\overline{x}_{2}\end{pmatrix} \\
																						&\Leftrightarrow \begin{pmatrix} x_{1} \\ -\overline{x}_{2}\end{pmatrix} \in R(\chi_{A}).
\end{align*}
Proof of $(2):$ Let $y_{1}+y_{2}\cdot n \in H.$ Then
\begin{equation*}
y_{1} + y_{2} \cdot n \in N(A)^{\bot}= \overline{R(A^{*})} \Leftrightarrow \begin{pmatrix} y_{1} \\ -\overline{y}_{2}\end{pmatrix} \in \overline{R(\chi_{A}^{*})} = N(\chi_{A})^{\bot}.
\end{equation*}
Similarly,
\begin{align*}
y_{1} + y_{2} \cdot n \in {R}(A)^{\bot} = N(A^{*}) & \Leftrightarrow \begin{pmatrix} y_{1} \\ -\overline{y}_{2}\end{pmatrix} \in {N}(\chi_{A^{*}}) = {R}(\chi_{A})^{\bot}. \qedhere
\end{align*}
\end{proof}
\begin{lemma}\label{iso}
	 Let $A \in \mathcal{B}(H)$. Then
	\begin{enumerate}
		\item $A$ is orthogonal projection if and only if $\chi_{A}$ is orthogonal projection
		\item $A$ is partial isometry if and only if $\chi_{A}$ is partial isometry.
	\end{enumerate}
 \end{lemma}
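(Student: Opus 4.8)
The plan is to transport both properties across the dictionary already constructed between $\mathcal{B}(H)$ and $\mathcal{B}(H^{Jm}_{+}\oplus H^{Jm}_{+})$. Recall that $\xi\colon A\mapsto\chi_{A}$ is an injective $\mathbb{C}_{m}$-algebra homomorphism (Proposition \ref{injective}) intertwining adjoints, $\chi_{A^{*}}=\chi_{A}^{*}$ (Theorem \ref{equivalents}(1)), and matching null spaces, ranges and their orthogonal complements (Lemma \ref{comparision}). Since $H^{Jm}_{+}\oplus H^{Jm}_{+}$ is a genuine $\mathbb{C}_{m}$-Hilbert space by Lemma \ref{directsum}, the classical descriptions ``orthogonal projection $=$ self-adjoint idempotent'' and ``partial isometry $=$ operator $A$ with $A^{*}A$ an orthogonal projection'' are available for $\chi_{A}$ without change.

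For the first equivalence, observe that $A\in\mathcal{B}(H)$ is an orthogonal projection precisely when $A=A^{*}$ and $A^{2}=A$ (a self-adjoint idempotent being automatically the orthogonal projection onto its closed range). Now $A=A^{*}$ iff $\chi_{A}=\chi_{A}^{*}$ by Theorem \ref{equivalents}(2), while $A^{2}=A$ iff $\chi_{A^{2}}=\chi_{A}$, i.e. $\chi_{A}^{2}=\chi_{A}$, because $\xi$ is an injective homomorphism. Hence $A$ is an orthogonal projection iff $\chi_{A}$ is a self-adjoint idempotent, that is, iff $\chi_{A}$ is an orthogonal projection.

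For the second equivalence I would use the standard criterion that $A$ is a partial isometry iff $A^{*}A$ is an orthogonal projection; its proof uses only the inner-product axioms, Cauchy--Schwarz and the identity $N(A^{*}A)=N(A)$, all valid here. Then $A$ is a partial isometry iff $A^{*}A$ is an orthogonal projection iff $\chi_{A^{*}A}=(\chi_{A})^{*}\chi_{A}$ is an orthogonal projection (by the first part) iff $\chi_{A}$ is a partial isometry. If one prefers to stay strictly inside the paper, the alternative is direct: the correspondence $x_{1}+x_{2}\cdot n\mapsto\begin{pmatrix}x_{1}\\ -\overline{x}_{2}\end{pmatrix}$ is a norm-preserving bijection of $H$ onto $H^{Jm}_{+}\oplus H^{Jm}_{+}$ (conjugation being an involution on $H^{Jm}_{+}$), it takes $N(A)^{\bot}$ onto $N(\chi_{A})^{\bot}$ by Lemma \ref{comparision}(2), and the computation performed in the proof of $\|\chi_{A}\|=\|A\|$ gives $\big\|\chi_{A}\begin{pmatrix}x_{1}\\ -\overline{x}_{2}\end{pmatrix}\big\|=\|A(x_{1}+x_{2}\cdot n)\|$; thus $\|Ax\|=\|x\|$ on $N(A)^{\bot}$ iff $\|\chi_{A}y\|=\|y\|$ on $N(\chi_{A})^{\bot}$.

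The only non-formal point, and hence the part I expect to need the most care, is this transfer of the partial-isometry property: one must either pin down the ``$A^{*}A$ is a projection'' characterization in the quaternionic setting or verify the displayed norm identity together with the fact that the bijection above matches $N(A)^{\bot}$ with $N(\chi_{A})^{\bot}$. Once that is in hand, everything else is a mechanical application of Proposition \ref{injective}, Theorem \ref{equivalents} and Lemma \ref{comparision}.
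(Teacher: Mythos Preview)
Your proof is correct. Part~(1) is identical to the paper's argument. For part~(2), your primary route---the characterization ``$A$ is a partial isometry iff $A^{*}A$ is an orthogonal projection'' combined with part~(1) and the multiplicativity of $\xi$---is not the one the paper takes; the paper instead performs the direct norm computation you outline as your alternative, using Lemma~\ref{comparision}(2) to identify $N(A)^{\bot}$ with $N(\chi_{A})^{\bot}$ and then verifying $\big\|\chi_{A}\big(\begin{smallmatrix}x_{1}\\ x_{2}\end{smallmatrix}\big)\big\|=\|A(x_{1}-\overline{x}_{2}\cdot n)\|$ explicitly. Your algebraic approach is cleaner and makes the equivalence an immediate corollary of part~(1), at the small cost of having to check that the projection criterion for partial isometries holds in the quaternionic setting (which it does, via the polarization argument you indicate); the paper's computational approach avoids that lemma but is more hands-on and only spells out one direction.
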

\begin{proof} Proof of $(1):$ By Theorem \ref{comparision} and Proposition \ref{equivalents},
	
\begin{align*}
	A^{2}=A \; \text{and} \; A^{*}=A &\Leftrightarrow \chi_{A^{2}} = \chi_{A} \; \text{and} \; (\chi_{A})^{*} = \chi_{A} \\
	& \Leftrightarrow (\chi_{A})^{2}= \chi_{A} \; \text{and} \; (\chi_{A})^{*} = \chi_{A}.
\end{align*}
\noindent	
Proof of $(2):$
If $A$ is a partial isometry, then
\begin{equation*}
\|A(x_{1}+x_{2}\cdot n)\| = \|x_{1} + x_{2} \cdot n\|, \; \text{for all}\; x_{1}+x_{2}\cdot n \in N(A)^{\bot}.
\end{equation*}
By Lemma \ref{comparision}(2), for every $\begin{pmatrix} x_{1} \\ {x}_{2}\end{pmatrix} \in N(\chi_{V})^{\bot},$ we have $x_{1}-\overline{x}_{2} \cdot n \in N(V)^{\bot}.$ Therefore
\begin{align*}
\left\|\chi_{A} \begin{pmatrix} x_{1} \\ {x}_{2}\end{pmatrix}\right\|^{2} &= \left\|\begin{pmatrix} A_{1} & A_{2} \\ -\overline{A}_{2} & \overline{A}_{1}\end{pmatrix}\begin{pmatrix} x_{1} \\ {x}_{2}\end{pmatrix}\right\|^{2} \\
 & = \left\|\begin{pmatrix} A_{1}x_{1}+A_{2}{x}_{2} \\ -\overline{A}_{2}x_{1}+ \overline{A}_{1}{x}_{2}\end{pmatrix}\right\|^{2} \\
 & = \|A_{1}x_{1}+A_{2}{x}_{2}\|^{2} + \|-\overline{A}_{2}x_{1}+ \overline{A}_{1}{x}_{2}\|^{2} \\
 & = \|A(x_{1}+{x}_{2}\cdot n)\|^{2}\\
 & = \|x_{1}+{x}_{2}\cdot n\|^{2}\\
 & =  \left\|\begin{pmatrix} x_{1} \\ {x}_{2}\end{pmatrix}\right\|^{2}.
\end{align*}
We conclude that $A$ is partial isometry if and only if $\chi_{A}$ is partial isometry.
\end{proof}
\begin{prop}\label{frompolar}
Let $A \in \mathcal{B}(H)$. Suppose $U$ be a partial isometry such that $A = U|A|$ and $A = U_{0}|A|$ be the polar decomposition of $A$. Then $U = U_{0} + VP$ for some partial isometry $V$ and $P = P_{N(A)}$.
\end{prop}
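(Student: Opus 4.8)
The plan is to transport the factorization $A=U|A|$ into the complex Hilbert space $H^{Jm}_{+}\oplus H^{Jm}_{+}$ via the homomorphism $\xi$ of Proposition \ref{injective}, apply the complex result \cite[Proposition 2.5]{ichinose} there, and then descend the resulting decomposition back to $H$. First I would record that $\chi_{|A|}=|\chi_{A}|$: by Theorem \ref{equivalents}(3) the operator $\chi_{|A|}$ is positive, and by Proposition \ref{injective} together with Theorem \ref{equivalents}(1) we have $\chi_{|A|}^{2}=\chi_{|A|^{2}}=\chi_{A^{*}A}=\chi_{A}^{*}\chi_{A}$, so uniqueness of the positive square root in the complex Hilbert space forces $\chi_{|A|}=|\chi_{A}|$. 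Applying $\xi$ to $A=U|A|$ and to the polar decomposition $A=U_{0}|A|$ now gives $\chi_{A}=\chi_{U}\,|\chi_{A}|=\chi_{U_{0}}\,|\chi_{A}|$, in which $\chi_{U}$ and $\chi_{U_{0}}$ are partial isometries by Lemma \ref{iso}(2). Because $N(U_{0})=N(A)$, Lemma \ref{comparision}(1) (applied to $U_{0}$ and to $A$) yields $N(\chi_{U_{0}})=N(\chi_{A})$, so $\chi_{A}=\chi_{U_{0}}|\chi_{A}|$ is genuinely the polar decomposition of $\chi_{A}$ in the sense of Theorem \ref{classical}.

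Next I would invoke \cite[Proposition 2.5]{ichinose} in the $\mathbb{C}_{m}$-Hilbert space $H^{Jm}_{+}\oplus H^{Jm}_{+}$ (Lemma \ref{directsum}): since $\chi_{U}$ is a partial isometry with $\chi_{A}=\chi_{U}|\chi_{A}|$, one can write $\chi_{U}=\chi_{U_{0}}+V'Q$, where $Q=P_{N(\chi_{A})}$ and $V'Q$ is a partial isometry with final space contained in $R(\chi_{A})^{\bot}$. The essential identification is $Q=\chi_{P}$ with $P:=P_{N(A)}$: by Lemma \ref{iso}(1) the operator $\chi_{P}$ is an orthogonal projection, while Lemma \ref{comparision}(1) applied to $P$ and to $A$ shows that $R(\chi_{P})$ and $N(\chi_{A})$ are both the image of $R(P)=N(A)$ under the correspondence $x_{1}+x_{2}\cdot n\mapsto\binom{x_{1}}{-\overline{x}_{2}}$; hence $R(\chi_{P})=N(\chi_{A})$ and therefore $\chi_{P}=Q$.

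Finally, put $V:=U-U_{0}\in\mathcal{B}(H)$. Then Proposition \ref{injective} gives $\chi_{V}=\chi_{U}-\chi_{U_{0}}=V'Q$, which is a partial isometry, so $V$ is a partial isometry by Lemma \ref{iso}(2); and $\chi_{VP}=\chi_{V}\chi_{P}=V'Q\,Q=V'Q=\chi_{V}$ using $Q^{2}=Q$, whence $VP=V$ by injectivity of $\xi$. Therefore $U=U_{0}+V=U_{0}+VP$ with $V$ a partial isometry and $P=P_{N(A)}$, as claimed. I expect the main obstacle to be exactly this passage back from $H^{Jm}_{+}\oplus H^{Jm}_{+}$ to $H$: checking $\chi_{P_{N(A)}}=P_{N(\chi_{A})}$ and that the summand $V'Q$ lies in the range of $\xi$, since the complex partial isometry produced by Ichinose's theorem need not a priori be of the form $\chi_{(\cdot)}$ — it is precisely the composition with the projection $Q$ (equivalently, the identity $V=U-U_{0}$) that makes the decomposition descend to $H$.
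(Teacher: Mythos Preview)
Your argument is correct but takes a genuinely different route from the paper. The paper's own proof is one line: it says the argument follows the same lines as \cite[Proposition~2.5]{ichinose}, meaning Ichinose's elementary computation can be carried out verbatim in the quaternionic setting without ever leaving $H$. Concretely, one sets $V:=U-U_{0}$, observes that $(U-U_{0})|A|=0$ forces $V$ to vanish on $\overline{R(|A|)}=N(A)^{\bot}$ while $U_{0}|_{N(A)}=0$ gives $V=U$ on $N(A)$, so $V=UP$ with $P=P_{N(A)}$; finally, $\|U|A|x\|=\||A|x\|$ yields $N(U)\subseteq N(A)$, whence $V^{*}V=PU^{*}UP=P\,P_{N(U)^{\bot}}P=P_{N(A)\cap N(U)^{\bot}}$ is a projection and $V$ is a partial isometry. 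None of the slice-space apparatus is needed.

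Your approach instead transports everything to the $\mathbb{C}_{m}$-Hilbert space via $\xi$, applies Ichinose's complex result there, and then descends. This works and nicely illustrates that the $\chi$-correspondence respects all the ingredients (moduli, projections, partial isometries, kernels) --- machinery the paper reserves for Theorem~\ref{main}, where a genuinely complex input is required. What the direct approach buys is brevity and self-containment; what yours buys is a uniform reduction principle. One small point to tighten: you assert that $V'Q$ (rather than $V'$) is a partial isometry. This is true because in Ichinose's construction the partial isometry $V'$ has initial space contained in $N(\chi_{A})=R(Q)$, so $V'Q=V'$; you allude to this in your final paragraph, but it deserves an explicit sentence rather than being folded into the anticipated ``obstacle''.
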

\begin{proof}
	Proof follows in the similar lines of \cite[Proposition 2.5]{ichinose}
\end{proof}
\begin{thm}\label{main}
Let $A\in \mathcal{B}(H) $ and $U$ be a partial isometry with $A = U|A|$.  Then  $U=U_{0}$ if and only if either $N(A) = \{0\}$ or $R(A)^{\bot} = \{0\}.$
\end{thm}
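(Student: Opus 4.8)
The plan is to prove the two implications separately, using Proposition \ref{frompolar} as the main tool; the ``only if'' direction will be handled by contraposition, by exhibiting a second partial isometry whenever neither of the two degeneracy conditions holds.

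For the ``if'' direction, suppose $N(A)=\{0\}$ or $R(A)^{\bot}=\{0\}$, and let $U$ be any partial isometry with $A=U|A|$. By Proposition \ref{frompolar} (modelled on \cite[Proposition 2.5]{ichinose}) write $U=U_{0}+VP$, where $P=P_{N(A)}$ and $V$ is a partial isometry whose initial space is contained in $N(A)$ and whose final space is contained in $R(A)^{\bot}$. If $N(A)=\{0\}$ then $P=0$, so $VP=0$ and $U=U_{0}$. If $R(A)^{\bot}=\{0\}$ then $R(V)\subseteq R(A)^{\bot}=\{0\}$, so $V=0$ and again $U=U_{0}$.

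For the ``only if'' direction I argue by contraposition: assuming $N(A)\neq\{0\}$ and $R(A)^{\bot}\neq\{0\}$, I will produce a partial isometry $U_{1}\neq U_{0}$ with $A=U_{1}|A|$, so that a partial isometry satisfying $A=U|A|$ need not equal $U_{0}$. First I would fix unit vectors $e\in N(A)$ and $f\in R(A)^{\bot}$ and define $V\colon H\to H$ by $Vx=f\cdot\langle e\mid x\rangle$. A short check shows $V$ is right $\mathbb{H}$-linear and bounded with $N(V)=\{x:\langle e\mid x\rangle=0\}=(e\mathbb{H})^{\bot}$, hence its initial space is the closed right submodule $e\mathbb{H}\subseteq N(A)$, its final space is $f\mathbb{H}\subseteq R(A)^{\bot}$, and $\|V(eq)\|=|q|=\|eq\|$; so $V$ is a partial isometry. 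Put $U_{1}:=U_{0}+V$. Since $|A|$ is self-adjoint and $e\in N(A)=N(|A|)$ (recall $\||A|x\|=\|Ax\|$), we get $V|A|x=f\cdot\langle |A|e\mid x\rangle=0$, whence $U_{1}|A|=U_{0}|A|=A$; and $U_{1}e=f\neq 0=U_{0}e$, so $U_{1}\neq U_{0}$. To see $U_{1}$ is a partial isometry, decompose $x=x'+x''$ with $x'\in N(A)^{\bot}$, $x''\in N(A)$; then $Vx'=0$ and $U_{0}x''=0$, so $U_{1}x=U_{0}x'+Vx''$ with $U_{0}x'\in\overline{R(A)}$ and $Vx''\in R(A)^{\bot}$, and by orthogonality $\|U_{1}x\|^{2}=\|U_{0}x'\|^{2}+\|Vx''\|^{2}$. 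This is $\le\|x\|^{2}$ for all $x$ and equals $\|x\|^{2}$ exactly on $N(A)^{\bot}\oplus e\mathbb{H}=N(U_{1})^{\bot}$, using that $U_{0}$ is isometric on $N(A)^{\bot}$ and $V$ on $e\mathbb{H}$.

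The step I expect to be the main obstacle is the ``only if'' part: one must manufacture an honest second partial isometry, and for the Pythagorean splitting above to go through one needs the inclusions $e\mathbb{H}\subseteq N(A)$, $f\mathbb{H}\subseteq R(A)^{\bot}$ together with $\overline{R(A)}\perp R(A)^{\bot}$ — all immediate from the choices $e\in N(A)$, $f\in R(A)^{\bot}$, but the bookkeeping over the slice decomposition of $H$ has to be done with some care. As an alternative route one can reduce everything to the complex case: by Theorem \ref{equivalents} and uniqueness of the positive square root one obtains $\chi_{|A|}=|\chi_{A}|$ and that $\chi_{U_{0}}$ is the polar part of $\chi_{A}$, so the statement follows from \cite[Theorem 1.2]{ichinose} applied on the complex Hilbert space $H^{Jm}_{+}\oplus H^{Jm}_{+}$ together with Lemma \ref{comparision}.
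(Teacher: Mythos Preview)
Your argument is correct, and your primary route is genuinely different from the paper's. The paper proves Theorem~\ref{main} by transporting everything to the complex Hilbert space $H^{Jm}_{+}\oplus H^{Jm}_{+}$ via the map $A\mapsto\chi_{A}$: it checks that $\chi_{A}=\chi_{U}\chi_{|A|}$ with $\chi_{|A|}=|\chi_{A}|$ and $\chi_{U}$ a partial isometry, invokes \cite[Theorem~1.2]{ichinose} on the complex side, and then pulls the kernel/range conditions back through Lemma~\ref{comparision}. This is exactly what you sketch as your ``alternative route'' in the last paragraph.

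Your main argument, by contrast, stays entirely inside the quaternionic space. For the ``if'' direction you read off the conclusion directly from the decomposition $U=U_{0}+VP$ of Proposition~\ref{frompolar}; note that the inclusion $R(V)\subseteq R(A)^{\bot}$ you use is not stated explicitly in the paper's version of that proposition, but it does follow from the proof (and from \cite[Proposition~2.5]{ichinose}), so this is only a cosmetic gap. For the ``only if'' direction your rank-one construction $Vx=f\cdot\langle e\mid x\rangle$ and the Pythagorean computation showing $U_{1}=U_{0}+V$ is a partial isometry with initial space $N(A)^{\bot}\oplus e\mathbb{H}$ are clean and correct; the slice machinery plays no role. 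The payoff of your approach is that it is more elementary and does not require $H$ to be separable or the apparatus of $H^{Jm}_{+}$ at all, whereas the paper's approach has the virtue of reducing the quaternionic statement wholesale to the already-known complex theorem.
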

\begin{proof} Since $A = U|A|$, we have  $\chi_{A} = \chi_{(U|A|)} = \chi_{U} \cdot \chi_{|A|}$.
By Theorem \ref{iso}(2), it is clear that $\chi_{U}$ is partial isometry. Since $|A|^{2}= A^{*}A,$ we have
\begin{equation*}
|\chi_{A}|^{2} = \chi_{A}^{*}\chi_{A} = \chi_{A^{*}} \chi_{A}= \chi_{A^{*}A}= \chi_{|A|^{2}} = \chi_{|A|}^{2}.
\end{equation*}
 It is clear from \cite[Theorem 2]{ichinose}, that $\chi_{U} = \chi_{U_{0}}$  if and only if either $N(\chi_{A}) = \{0\}$ or $R(\chi_{A})^{\bot} = \{0\},$ but Lemma \ref{comparision} ensures that $N(\chi_{A}) = \{0\}$ or $R(\chi_{A})^{\bot} = \{0\}$ if and only if  $N({A}) = \{0\}$ or $R({A})^{\bot} = \{0\}.$
Thus $U = U_{0}$ if and only if either $N(A) = \{0\}$ or $R(A)^{\bot} = \{0\}.$
\end{proof}
We give an example of a bounded right $\mathbb{H}$- linear operator $A$ with $N(A) \neq \{0\}$ and $N(A^{*})\neq \{0\}$ implies there is a partial isometry $U$ such that $A = U|A|$ with $U \neq U_{0}$.
\begin{eg}\label{Example}Define $A \colon \ell^{2}(\mathbb{N}, \mathbb{H}) \to \ell^{2}(\mathbb{N}, \mathbb{H}) $ by
	\begin{equation*}
	A\big(x_{1}, x_{2}, x_{3}, \cdots\big) = \big(0, \frac{x_{1}}{\sqrt{2}},0,\frac{x_{2}}{\sqrt{2}},0, 6\frac{x_{6}}{\sqrt{37}},7\frac{x_{7}}{\sqrt{50}} \cdots \big), \; \text{for all} \; (x_{n}) \in \ell^{2}(\mathbb{N}, \mathbb{H}).
	\end{equation*}
	Then $A$ is bounded right $\mathbb{H}$- linear operator. The adjoint $A^{*}$  is given by
	\begin{equation*}
	A^{*}\big(x_{1}, x_{2}, x_{3}, \cdots \big) = \big(\frac{x_{2}}{\sqrt{2}}, \frac{x_{4}}{\sqrt{2}}, 0, 0 , 0, 6\frac{x_{6}}{\sqrt{37}}, 7\frac{x_{7}}{\sqrt{50}} \cdots\big), \; \text{for all} \; (x_{n}) \in \ell^{2}(\mathbb{N}, \mathbb{H}).
	\end{equation*}
	Here $N(A) = span\{e_{3}, e_{4}, e_{5}\}$ and $N(A^{*}) = span \{e_{1}, e_{3}, e_{5}\}$. The modulus of $A$ is given by
	 \begin{equation*}
	 |A|\big(x_{1}, x_{2}, x_{3}, \cdots\big) = \big(\frac{x_{1}}{\sqrt{2}}, \frac{x_{2}}{\sqrt{2}},0,0,0, 6\frac{x_{6}}{\sqrt{37}},7\frac{x_{7}}{\sqrt{50}}, \cdots \big).
	 \end{equation*}
	
	  Define $U_{0} \colon \ell^{2}(\mathbb{N}, \mathbb{H}) \to \ell^{2}(\mathbb{N}, \mathbb{H}) $ by
	  \begin{equation*}
	  U_{0}\big(x_{1}, x_{2}, x_{3}, \cdots\big) = \big(x_{1},x_{2},0,0,0,x_{6},x_{7}, \cdots \big),\; \text{for all} \; (x_{n}) \in \ell^{2}(\mathbb{N}, \mathbb{H}).
	  \end{equation*}
	  Then $U_{0}$ is a partial isometry with $N(U_{0}) = N(A)$ and $A = U_{0} |A|$. Also define
	  \begin{equation*}
	  U = U_{0} + VP,
	  \end{equation*}
	   where $P$ is an orthogonal projection onto $N(A)$. This implies $P|A| = 0$. Define $V \colon N(A) \to R(A)^{\bot}$ by
	   \begin{equation*}
	   V(e_{3} \cdot \alpha + e_{4} \cdot \beta + e_{5} \cdot \gamma) = e_{1} \cdot \alpha + e_{3} \cdot \beta + e_{5} \cdot \gamma, \; \text{for every} \; \alpha, \beta, \gamma \in \mathbb{H}.
	   \end{equation*}
	    Let $U = U_{0} + VP$. We show that $A=U|A|$ and $U \neq U_{0}$. It is clear that $U|A| = U_{0}|A| + VP|A| = U_{0}|A| = A $. We have $U_{0}(e_{4}) = 0$ but  $U(e_{4}) = U_{0}(e_{4})+VP(e_{4}) = e_{3} \neq 0$. Hence $U \neq U_{0}$.
\end{eg}
 The Proposition \ref{frompolar} is also true for densely defined closed right linear operators in $H$. We examine this with the following example.
\begin{eg}
	Define $S \colon \mathcal{D}(S) \to \ell^{2}(\mathbb{N}, \mathbb{H})$ by
	\begin{equation*}
	S\big(x_{1}, x_{2}, x_{3}, \cdots \big) = \big(0, x_{1}, 0, x_{2}, 0 , 6x_{6}, 7x_{7}, \cdots\big), \; \text{for all} \;(x_{n}) \in \mathcal{D}(S),
	\end{equation*}
	where
	\begin{equation*}
	\mathcal{D}(S) = \left\{(x_{1}, x_{2}, x_{3}, \cdots) \in \ell^{2}(\mathbb{N}, \mathbb{H}) \colon (0, x_{1}, 0, x_{2}, 0 , 6x_{6}, 7x_{7}, \cdots ) \in \ell^{2}(\mathbb{N}, \mathbb{H}) \right\}.
	\end{equation*}
	It is clear that $S$ is right $\mathbb{H}$- linear unbounded operator. We show that $S$ is densely defined closed operator. Since $C_{00}$ is a dense subspace of $\ell^{2}(\mathbb{N}, \mathbb{H})$ and $C_{00} \subseteq \mathcal{D}(S) $, we conclude that $\mathcal{D}(S)$ is dense in $\ell^{2}(\mathbb{N}, \mathbb{H})$. Hence $S$ is densely defined.
	Now we show that $S$ is closed operator.
	
	Let $(z_{n})$ be sequence in $\mathcal{D}(S)$, where $z_{n} = (z_{n}^{(j)})_{j \in \mathbb{N}}$ such that $(z_{n})$ converges $z = (z^{(1)}, z^{(2)}, z^{(3)}, \cdots)$ and $Sz_{n}$ converges to $y \in \ell^{2}(\mathbb{N}, \mathbb{H})$. Then for each fixed $j$ the column  sequence $(z_{n}^{(j)})$ converges to  $ z^{(j)} $ as $n \to \infty$ and by the definition of $S$, we have $Sz_{n}$ converges to $Sz$. Therefore $Sz = y$. The adjoint of $S$ is given by
	\begin{equation*}
	S^{*}\big(x_{1}, x_{2}, x_{3}, \cdots \big) = \big(x_{2},x_{4}, 0 ,0,0,  6x_{6}, 7x_{7}, \cdots\big), \; \text{for all} \;(x_{n}) \in \mathcal{D}(S^{*}).
	\end{equation*}
	Here $N(S) \neq \{0\}$ and $R(S)^{\bot} = N(S^{*}) \neq \{0\}$. The bounded transform of $S$ is given by $\mathcal{Z}_{S} = S(I+S^{*}S)^{-\frac{1}{2}}$, that is
	\begin{equation*}
	\mathcal{Z}_{S}\big(x_{1}, x_{2},x_{3}, \cdots \big) = \big(0, \frac{x_{1}}{\sqrt{2}},0, \frac{x_{2}}{\sqrt{2}},0, \frac{6x_{6}}{\sqrt{37}}, \frac{7x_{7}}{\sqrt{50}}, \cdots\big), \; \text{for all} \;(x_{n}) \in \ell^{2}(\mathbb{N}, \mathbb{H}).
	\end{equation*}
	This co-insides with the bounded operator given in Example \ref{Example}.  By Theorem \ref{unboundedunique}, for any partial isometry $U$ with $\mathcal{Z_{S}} = U|\mathcal{Z}_{S}|$, we have $S = U|S|$. In fact $S = U_{0}|S|$, where $U_{0}$ is same as given in Example \ref{Example}.
	
\end{eg}
Now we prove the uniqueness  theorem by using the bounded transform.

\begin{thm}\label{unboundedunique}
	Let $T\colon \mathcal{D}(T)\to H$ be closed and densely defined right $\mathbb{H}-$ linear. Let $U$ be a partial isometry such that $T = U|T|$. Then $U= U_{0}$ if and only if either $N(U)=\{0\}$ or $R(U)^{\bot}=\{0\}.$
\end{thm}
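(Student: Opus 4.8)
The plan is to derive the statement from its bounded counterpart, Theorem~\ref{main}, by passing to the bounded transform $Z_{T}=T(I+T^{*}T)^{-\frac12}$. The key preliminary observation is that the \emph{same} partial isometry $U$ occurring in $T=U|T|$ also occurs in a factorization of $Z_{T}$, namely $Z_{T}=U|Z_{T}|$. To establish this I would use that $|T|$ and the bounded positive operator $(I+T^{*}T)^{-\frac12}$ commute, being functions of $T^{*}T$, so that $|T|(I+T^{*}T)^{-\frac12}$ is a bounded positive operator; its square equals $T^{*}T(I+T^{*}T)^{-1}=I-(I+T^{*}T)^{-1}=Z_{T}^{*}Z_{T}=|Z_{T}|^{2}$, and hence, by uniqueness of the positive square root (Theorem~\ref{strictlysqareroot}, \cite[Theorem~2.18]{Ghiloni}), $|Z_{T}|=|T|(I+T^{*}T)^{-\frac12}$. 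This identity is already implicit in the proof of Theorem~\ref{Existence}. Substituting $T=U|T|$ into $Z_{T}=T(I+T^{*}T)^{-\frac12}$ then gives
\[
Z_{T}=T(I+T^{*}T)^{-\frac12}=U\,|T|(I+T^{*}T)^{-\frac12}=U\,|Z_{T}|.
\]

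Now recall from the proof of Theorem~\ref{Existence} that the partial isometry $U_{0}$ of the polar decomposition $T=U_{0}|T|$ is precisely the polar-decomposition partial isometry of the bounded operator $Z_{T}$: that is, $Z_{T}=U_{0}|Z_{T}|$ with $N(U_{0})=N(|Z_{T}|)=N(Z_{T})$ and final space $\overline{R(Z_{T})}$. Since $H$ is separable, Theorem~\ref{main} applies to $A=Z_{T}$ and gives $U=U_{0}$ if and only if $N(Z_{T})=\{0\}$ or $R(Z_{T})^{\bot}=\{0\}$. Finally, Lemma~\ref{null} gives $N(Z_{T})=N(T)$ and $R(Z_{T})=R(T)$, hence $R(Z_{T})^{\bot}=R(T)^{\bot}$; since also $N(U_{0})=N(T)$ and $R(U_{0})^{\bot}=\overline{R(T)}^{\bot}=R(T)^{\bot}$, this is exactly the condition appearing in the statement, and the proof is complete.

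The step I expect to be the main obstacle is the first one: showing that a single bounded partial isometry $U$ simultaneously witnesses $T=U|T|$ and $Z_{T}=U|Z_{T}|$, since this is what lets the bounded theorem be transported to the closed densely defined setting. Everything afterwards is routine bookkeeping with Theorem~\ref{main} and Lemma~\ref{null}. The one point needing care is that $|T|$ and $(I+T^{*}T)^{\pm\frac12}$ are in general unbounded operators, so the identity $|Z_{T}|=|T|(I+T^{*}T)^{-\frac12}$ and the commutation invoked there are best justified through the uniqueness of positive square roots (as in Section~3 and the proof of Theorem~\ref{Existence}) rather than by manipulating unbounded operators directly.
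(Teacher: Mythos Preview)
Your proposal is correct and follows essentially the same route as the paper: reduce to the bounded case via the bounded transform by showing $|Z_{T}|=|T|(I+T^{*}T)^{-\frac12}$ (hence $Z_{T}=U|Z_{T}|$), apply Theorem~\ref{main} to $Z_{T}$, and translate the null/range conditions back via Lemma~\ref{null}. Your write-up is in fact slightly more explicit than the paper's (you spell out that the $U_{0}$ for $T$ and for $Z_{T}$ coincide, and you reconcile the $N(U)$, $R(U)^{\bot}$ formulation with the $N(T)$, $R(T)^{\bot}$ one).
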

\begin{proof} By Theorem \ref{boundedtransform}, we have
	\begin{equation} \label{|Z_{T}|equation}
	Z_{T} = T(I+T^{*}T)^{-\frac{1}{2}} = U|T|(I+T^{*}T)^{-\frac{1}{2}},
	\end{equation}
	a bounded operator with
	\begin{align*}
	|Z_{T}|^{2} = Z_{T}^{*}Z_{T} &= (I+T^{*}T)^{-\frac{1}{2}}T^{*}T(I+T^{*}T)^{-\frac{1}{2}} \\
	&= T^{*}T(I+T^{*}T)^{-1} \\
	&= (|T|(I+T^{*}T)^{-\frac{1}{2}})^{2}.
	\end{align*}
	Here we have used the fact that since $T^{*}T$ commute with $I+T^{*}T$, it commute with $(I+T^{*}T)^{-\frac{1}{2}}$.
	First, note that $|T|(I+T^{*}T)^{-\frac{1}{2}}$ is positive. By the uniqueness of the square root $|Z_{T}|= |T|(I+T^{*}T)^{-\frac{1}{2}}$, we have $Z_{T} = U|Z_{T}|.$ Applying Theorem \ref{main} to the bounded right $\mathbb{H}-$ linear operator $Z_{T},$ we obtain $U=U_{0}$  if and only if either $N(Z_{T})= \{0\}$ or $R(Z_{T}) = \{0\}.$ By Lemma \ref{null}, $U=U_{0}$ if and only if either $N(T)=\{0\}$ or $R(T)^{\bot}=\{0\}.$
\end{proof}
\section*{Acknowledgment}
The second author is thankful to INSPIRE (DST) for the support in the form of fellowship (No. DST/ INSPIRE Fellowship/2012/IF120551), Govt of India.

\end{document}